  \newcommand{\calA}{\mathcal{A}}
  \newcommand{\calG}{\mathcal{G}}
  \newcommand{\calL}{\mathcal{L}}
  \newcommand{\calS}{\mathcal{S}}
  \newcommand{\RR}{\mathbb{R}}
  \newcommand{\ZZ}{\mathbb{Z}}
  \newtheorem{theorem}{Theorem}[section]
  \newtheorem{proposition}[theorem]{Proposition}
  \newtheorem{corollary}[theorem]{Corollary}
  \newtheorem{lemma}[theorem]{Lemma}
  \theoremstyle{definition}
  \newtheorem{definition}[theorem]{Definition}
  \newtheorem*{claim*}{Claim}
  \newtheorem{example}[theorem]{Example}
  \newtheorem*{question*}{Question}
  \newtheorem*{answer*}{Answer}
  \newtheorem*{application*}{Application}
  \theoremstyle{remark}
  \newtheorem*{remark*}{Remark}
  \newcommand{\secref}[1]{Section~\ref{#1}}
  \newcommand{\thmref}[1]{Theorem~\ref{#1}}
  \newcommand{\corref}[1]{Corollary~\ref{#1}}
  \newcommand{\lemref}[1]{Lemma~\ref{#1}}
  \newcommand{\propref}[1]{Proposition~\ref{#1}}
  \newcommand{\eqnref}[1]{Equation~\eqref{#1}}
  \DeclareMathOperator{\id}{id}
  \DeclareMathOperator{\Proj}{Proj}  
  \DeclareMathOperator{\diam}{diam}    
  \DeclareMathOperator{\Ball}{Ball}  
  \newcommand{\Map}{\ensuremath{\operatorname{Map}}\xspace}    
  \newcommand{\Teich}{Teich\-m\"u\-ller\ } 
  \newcommand{\param}{{\mathchoice{\mkern1mu\mbox{\raise2.2pt\hbox{$
  \centerdot$}}
  \mkern1mu}{\mkern1mu\mbox{\raise2.2pt\hbox{$\centerdot$}}\mkern1mu}{
  \mkern1.5mu\centerdot\mkern1.5mu}{\mkern1.5mu\centerdot\mkern1.5mu}}}
  \newcommand{\from}{\colon\thinspace}
  \DeclarePairedDelimiter\abs{\lvert}{\rvert}
  \DeclarePairedDelimiter\norm{\lVert}{\rVert}  
  \DeclarePairedDelimiter\Norm{\Big\lVert}{\Big\rVert}
  \newcommand{\calg}{{\mathcal{g}}}  
  \newcommand{\aphi}{{\mathcal{a}_\phi}}
  \newcommand{\Aphi}{{\mathcal{A}_\phi}}
  \newcommand{\Gphi}{{\mathcal{G}_\phi}}
  \newcommand{\Sn}{{\mathcal{S}_n}}
  \newcommand{\CS}{{\mathcal{C}(S)}}
  \newcommand{\PG}{\Proj_\Gphi\!}
\begin{document}

\title{Geodesics in the mapping class group}


 \author   {Kasra Rafi}
 \address{Department of Mathematics, University of Toronto, Toronto, ON }
 \email{rafi@math.toronto.edu}

 \author   {Yvon Verberne}
 \address{Department of Mathematics, University of Toronto, Toronto, ON }
 \email{yvon.verberne@mail.utoronto.ca}
 
 
  \date{\today}

\begin{abstract}
We construct explicit examples of geodesics in the mapping class group and show that 
the shadow of a geodesic in mapping class group to the curve graph does not have to 
be a quasi-geodesic. We also show that the quasi-axis of a pseudo-Anosov 
element of the mapping class group may not have the strong contractibility property. 
Specifically, we show that, after choosing a generating set carefully, one can find a 
pseudo-Anosov homeomorphism $\phi$, a sequence of points $w_k$ and a sequence 
of radii $r_k$ so that the ball $B(w_k, r_k)$ is disjoint from a quasi-axis 
$\aphi$ of $\phi$, but for any projection map from mapping class group to 
$\aphi$, the diameter of the image of $B(w_k, r_k)$ grows like $\log(r_k)$.
\end{abstract}

\maketitle

\section{Introduction}
Let $S$ be a surface of finite type and let $\Map(S)$ denote the (pure) mapping class 
group of $S$, that is, the group of orientation preserving self homeomorphisms of $S$ 
fixing the punctures of $S$, up to isotopy.
This is a finitely generated group \cite{D1} and, after choosing a generating set, 
the word length turns $\Map(S)$ into a metric space. The geometry of $\Map(S)$
has been a subject of extensive study.  Most importantly, in \cite{MM2}, Masur
and Minsky gave an estimate for the word length of a mapping class using the
sub-surface projection distances and constructed efficient quasi-geodesics in the 
mapping class group, called hierarchy paths, connecting the identity to any
given mapping class. The starting point of the construction of a hierarchy path is 
a geodesic in the curve graph of $S$ which is known to be a Gromov hyperbolic space
\cite{MM1}. Hence, by construction, the shadow of a hierarchy path to the curve graph 
is nearly a geodesic. 

It may seem intuitive that any geodesic in the mapping class group should also 
have this property, considering that similar statements have been shown to be true 
in other settings. For example, it is known that the shadow of a geodesic
in \Teich space with respect to the \Teich metric 
is a re-parametrized quasi-geodesic in the curve graph
\cite{MM1}. The same is true for any geodesic in \Teich space
with respect to the Thurston metric \cite{LRT}, for any line of minima in \Teich 
space \cite{CRS}, for a grafting ray \cite{CDR}, or for the set of short curves in a hyperbolic 
$3$--manifold homeomorphic to $S \times \RR$ \cite{M3}. 
However, it is difficult to construct explicit examples of geodesics in $\Map(S)$ and 
so far, all estimates for the word length of an element have been up to a multiplicative error. 

In this paper, we argue that one should not expect geodesics in $\Map(S)$ to be 
well-behaved in general. Changing the generating set changes the metric on $\Map(S)$ 
significantly and a geodesic with respect to one generating set is only a quasi-geodesic 
with respect to another generating set. Since $\Map(S)$ is not Gromov hyperbolic 
(it contains flats), its quasi-geodesics are not well behaved in general. Similarly, 
one should not expect that the geodesics with respect to an arbitrary generating set 
to behave well either. 

\begin{figure}[ht]
\setlength{\unitlength}{0.01\linewidth}
\begin{picture}(100, 17)
\put(37,-2){\includegraphics[width=25\unitlength]{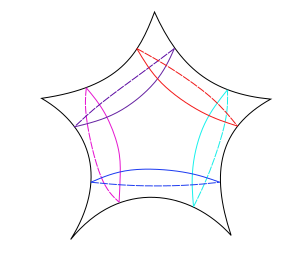}}

\put(42,13){$\alpha_1$}
\put(51,.5){$\alpha_2$}

\put(51.5, 15){$\alpha_3$} 
\put(41, 4){$\alpha_4$}
\put(57, 8){$\alpha_5$}

\end{picture} 
\caption{The curves $\alpha_1, \ldots, \alpha_5$ used to generate $\Sn$.} 
\label{Fig:Surface} 
\end{figure}

We make this explicit in the case where $S=S_{0,5}$ is the five-times punctured
sphere. Consider the curves $\alpha_1, \dots, \alpha_5$ depicted in 
Figure~\ref{Fig:Surface}. Fix an integer $n \gg 1$ (to be determined in the proof of 
Theorem \ref{Thm:Main-Theorem}), and consider the following generating set for 
$\Map(S)$
\begin{equation*}
\Sn= \Big\{ D_{\alpha_i}, s_{i,j} : i,j \in \mathbb{Z}_5, |i-j| = 1 \mod 5 \Big\}
\end{equation*}
where $s_{i,j}=D_{\alpha_i}^{n} D_{\alpha_j}^{-1}$,
and $D_\alpha$ is a Dehn twist around a curve $\alpha$. Since we are considering the 
pure mapping class group, the set $\left\{ D_{\alpha_i} \right\}_{i=1}^5$ already generates $\Map(S)$. 
We denote the distance on $\Map(S)$ induced by the generating set $\Sn$ by
$d_\Sn$. By an $\Sn$--geodesic, we mean a geodesic with respect to
this metric. 

\begin{theorem}\label{thm:shadow}
There is an $n \gg 1$ so that, for every $K, C >0$, there exists an $\Sn$--geodesic 
\[
\calG \from [0,m] \to \Map(S)
\] 
so that the shadow of $\calG$ to the curve 
graph $\CS$ is not a re-parametrized $(K,C)$--quasi-geodesic. 
\end{theorem}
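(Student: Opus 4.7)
The plan is to exhibit, for each pair $(K,C)$, an $\Sn$-geodesic $\calG\from[0,m]\to\Map(S)$ of length $m=m(K,C)$ whose shadow to $\CS$ remains in a region of diameter much smaller than $m$. Since the image of any re-parametrized $(K,C)$-quasi-geodesic of length $m$ must span $\CS$-distance at least $m/K-C$, making $m$ large relative to $K$ and $C$ will produce the contradiction.

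I would take $\calG$ to be the path traced by a word of the form $\calG(m)=s_{i_1,j_1}s_{i_2,j_2}\cdots s_{i_m,j_m}$, with the indices cycling through $\{1,\ldots,5\}$ in a prescribed pattern such as $(s_{1,2}s_{2,3}s_{3,4}s_{4,5}s_{5,1})^L$. This cyclic choice has to balance two competing requirements. Expanded as a Dehn twist product
\[
\calG(m)=D_{\alpha_{i_1}}^{n} D_{\alpha_{j_1}}^{-1}\cdots D_{\alpha_{i_m}}^{n} D_{\alpha_{j_m}}^{-1},
\]
the word should produce subsurface projections proportional to $mn$ (needed for the geodesicity lower bound below); at the same time, the composed action on a base curve $\gamma_0$ should be close to the identity in $\CS$, so that the shadow's image stays in a small $\CS$-neighborhood of $\gamma_0$.

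The main technical step is to show that $d_{\Sn}(\mathrm{id},\calG(m))=m$, i.e.\ that $\calG$ is actually geodesic. The idea is to adapt the Masur--Minsky distance formula to the generating set $\Sn$. From the Dehn twist expansion one computes $\sum_Y [d_Y(\mu_0,\calG(m)\mu_0)]_K \gmul mn$; conversely, a single letter of $\Sn$ contributes at most $O(n)$ units of subsurface projection to the sum, since $s_{i,j}=D_{\alpha_i}^{n} D_{\alpha_j}^{-1}$ produces at most $n$ units of annular twist on $\alpha_i$ up to bounded correction, while $D_{\alpha_k}^{\pm 1}$ contributes $O(1)$. Comparing these two estimates yields $d_{\Sn}(\mathrm{id},\calG(m))\gmul m$, matching the trivial upper bound. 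The hypothesis $n\gg 1$ enters crucially here: it places $\Sn$ in the regime where the shortcut generator $s_{i,j}$ is exactly as efficient, per unit of twist, as any alternative combination of generators, so that no clever rewriting can further shorten the word.

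Finally, the shadow $k\mapsto\calG(k)\cdot\gamma_0$ is shown to have $\CS$-diameter much less than $m$, as a consequence of the approximate fixing of $\gamma_0$ by each cyclic block. Choosing $m$ large enough relative to $(K,C)$ then rules out the re-parametrized quasi-geodesic property. The main obstacle is the geodesicity step: the standard distance formula is stated with respect to the Dehn-twist generating set, and one must verify that the shortcut generators $s_{i,j}$ compress the metric by precisely a factor of $n$ on the twist subgroups, and that no further unanticipated shortcuts arise from the interaction of these generators with the $D_{\alpha_k}^{\pm 1}$ or with each other.
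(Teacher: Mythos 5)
Your plan has a fundamental misconception about what a re-parametrized quasi-geodesic is, and this breaks the argument at the very first step. You claim that the image of a re-parametrized $(K,C)$-quasi-geodesic defined on $[0,m]$ must span $\CS$-distance at least $m/K-C$. This is false: re-parametrization allows arbitrary stalling, so a constant path, or any path whose image has uniformly bounded diameter, is trivially a re-parametrized quasi-geodesic. Consequently a long $\Sn$-geodesic whose shadow stays in a small $\CS$-neighborhood of a base curve $\gamma_0$ is not a counterexample at all --- it is precisely the sort of benign behavior the re-parametrized notion is designed to accommodate (this is, after all, why shadows of Teichm\"uller geodesics to $\CS$ are called re-parametrized quasi-geodesics despite long stretches of bounded progress). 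What a re-parametrized quasi-geodesic cannot do is \emph{backtrack}: travel a large $\CS$-distance away and then return close to where it started. That is the phenomenon one must exhibit.

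This is exactly what the paper does, and the mechanism is quite different from yours. The paper considers $D_{\alpha_1}^{n^k}$, whose shadow is $\alpha_1$ (same as the identity), and shows using the homomorphism $h\from\Map(S)\to\ZZ$ of \thmref{thm:homo} together with \lemref{cor1} and \propref{thm:geodesics} that the word-length lower bound coming from $h$ is achieved by the concatenation of the two specific geodesic segments from $\id$ to $\phi^{k/5}u_k$ and from $\phi^{k/5}u_k$ to $D_{\alpha_1}^{n^k}$. Thus there is an $\Sn$-geodesic from $\id$ to $D_{\alpha_1}^{n^k}$ passing through $\phi^{k/5}u_k$. Since $u_k\alpha_1=\alpha_1$ and $\phi$ is pseudo-Anosov with positive translation length $\sigma$ on $\CS$, the shadow at the midpoint is at $\CS$-distance at least $\sigma k/5$ from $\alpha_1$, while the endpoints both project to $\alpha_1$. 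As $k\to\infty$ this gives arbitrarily large backtracks, which kills the re-parametrized $(K,C)$-quasi-geodesic property for any fixed $K,C$. Notice also that the paper gets exact geodesicity not by adapting the Masur--Minsky distance formula (which only gives estimates up to multiplicative and additive error, far too coarse for your purposes) but from the abelianization-type homomorphism $h$, which gives exact word-length lower bounds that are then matched by explicit spellings. Your proposed route through the distance formula would be both technically difficult and insufficient to establish the exact equality you need for geodesicity.
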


Even though the mapping class group is not Gromov hyperbolic, it does have
hyperbolic directions. There are different ways to make this precise. 
For example, Behrstock proved \cite{Be} that in the direction of every pseudo-Anosov, 
the divergence function in $\Map(S)$ is super-linear. Another way to make this notion precise is to examine whether geodesics in $\Map(S)$ have the \emph{contracting property}.

This notion is defined analogously with Gromov hyperbolic spaces where, for every 
geodesic $\calG$ and any ball disjoint from $\calG$, the closest point projection of the ball 
to $\calG$ has a uniformly bounded diameter. However, often it is useful to work 
with different projection map. We call a map 
\[
\Proj \from X \to \calG
\]
from a metric space $X$ to any subset $\calG \subset X$ a 
\emph{$(d_1, d_2)$--projection map}, $d_1, d_2>0$, if for every $x \in X$ and $g \in \calG$, 
we have 
\[
d_X\big(\Proj (x), g \big) \leq d_1 \cdot d_X(x,g) + d_2. 
\]
This is a weak notion of projection since $\Proj$ is not even assumed to be coarsely 
Lipschitz. By the triangle inequality, the closest point projection is always a 
$(2,0)$--projection. 

\begin{definition}
A subset $\calG$ of a metric space 
$X$ is said to have the \emph{contracting property} 
if there is a constant $\rho <1$, a constant $B>0$ and a projection map 
$\Proj\from X \to \calG$ such that, for any ball $B(x,R)$ of radius $R$
 disjoint from $\calG$, the projection of a ball of radius $\rho R$, $B(x, \rho R)$, 
has a diameter at most $B$,
\[
 \diam_\Sn \big(\Proj(B(x,\rho R))\big) \leq B. 
\]
We say $\calG$ has the \emph{strong contracting property} if $\rho$ can be taken to 
be $1$. 
\end{definition}

The axis of a pseudo-Anosov element has the contracting property in many settings. 
This has been shown to be true in the setting of \Teich space by 
Minsky \cite{M1}, in the setting of the pants complex by Brock, Masur, and Minsky \cite{BM}
and in the setting of the mapping class group by Duchin and Rafi \cite{DR1}.

In work by Arzhantseva, Cashen and Tao, they asked if the axis of a pseudo-Anosov 
element in the mapping class group has the strong contracting property and showed
that a positive answer would imply that the mapping class group is growth tight \cite{ACT}.
Additionally, using the work of Yang \cite{Yang}, one can show that if one pseudo-Anosov
element has a strongly contracting axis with respect to some generating set, then a generic 
element in mapping class group has a strongly contracting axis with respect to this 
generating set. Similar arguments would also show that the mapping class group with 
respect to this generating set has purely exponential growth. 

However, using our specific generating set, we show that this does not always hold:

\begin{theorem}\label{Thm:Main-Theorem}
For every $d_1, d_2>0$, there exists an $n \gg 1$, a pseudo-Anosov map $\phi$, 
a constant $c_n>0$, a sequence  of elements $w_k \in \Map(S)$ and a sequence of 
radii $r_k>0$ where $r_k \to \infty$ as $k \to \infty$ such that the following holds. 
Let $\aphi$ be a quasi-axis for $\phi$ in $\Map(S)$ and let 
$\Proj_\aphi\from \Map(S) \to \aphi$ be any $(d_1, d_2)$--projection map. 
Then the ball of radius $r_k$ centered at $w_k$, $B(w_k,r_k)$, is disjoint
from $\aphi$ and 
\begin{equation*}
\diam_{\calS_n} \Big(\Proj_\aphi \big(B(w_k, r_k)\big)\Big) \geq c_n \log(r_k).
\end{equation*}
\end{theorem}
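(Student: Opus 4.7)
The plan is to exhibit a pseudo-Anosov $\phi$ whose quasi-axis $\aphi$ in $(\Map(S), d_\Sn)$ behaves as a quasi-geodesic with no available shortcuts, together with balls $B(w_k, r_k)$ that are disjoint from $\aphi$ yet contain points within bounded $d_\Sn$-distance of a logarithmically long arc of $\aphi$. The failure of strong contraction will then follow essentially from the defining inequality of a $(d_1, d_2)$-projection. I take $\phi$ to be a single shortcut generator $\phi = s_{i,j} = D_{\alpha_i}^{n} D_{\alpha_j}^{-1}$, for indices $i,j$ with $|i-j| \equiv 1 \pmod{5}$ chosen so that $\alpha_i$ and $\alpha_j$ fill $S$; for $n$ large enough Thurston's construction shows $\phi$ is pseudo-Anosov. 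Since $\phi \in \Sn$ we have $|\phi^m|_\Sn \leq |m|$, and the curve-graph translation length of $\phi$ together with the Lipschitz property of the orbit map $\Map(S) \to \CS$ gives a matching lower bound $|\phi^m|_\Sn \geq |m|/L$. Hence the orbit $\aphi := \{\phi^m : m \in \ZZ\}$ is a quasi-geodesic.

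The heart of the argument is to produce the $w_k$. One uses a third curve $\alpha_\ell$ chosen so that $D_{\alpha_\ell}$ commutes with one of $D_{\alpha_i}, D_{\alpha_j}$; this is possible on the five-times punctured sphere by taking $\alpha_\ell$ disjoint from the corresponding $\alpha_i$ or $\alpha_j$. The shortcut relations $D_{\alpha_\ell}^{n} = s_{\ell, \ell+1} D_{\alpha_{\ell+1}}$ and their cyclic analogues, combined with the available commutations, produce diamond-shaped detour relations in the Cayley graph. Iterating $O(\log r_k)$ such diamonds yields a sequence $x_s \in \Map(S)$, $0 \leq s \leq M_k$ with $M_k \asymp \log r_k$, satisfying
\[
d_\Sn(x_s, \phi^{m_s}) \leq C \quad \text{and} \quad d_\Sn(x_s, x_{s'}) \leq r_k,
\]
where the integers $m_s$ form an arithmetic progression of step $1$ and range $\asymp \log r_k$. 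Taking $w_k$ to be the combinatorial midpoint of this sequence forces all $x_s \in B(w_k, r_k)$, while the lower bound $d_\Sn(w_k, \aphi) > r_k$ is certified by the Masur--Minsky distance formula: the subsurface-projection coefficients created by the twist powers entering $w_k$ cannot be matched by any short combination of generators from $\Sn$ once $n$ is chosen large enough.

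For any $(d_1, d_2)$-projection $\Proj_\aphi$, the defining inequality applied to $g = \phi^{m_s}$ yields
\[
d_\Sn\bigl(\Proj_\aphi(x_s), \phi^{m_s}\bigr) \leq d_1 \cdot d_\Sn(x_s, \phi^{m_s}) + d_2 \leq d_1 C + d_2,
\]
so each $\Proj_\aphi(x_s)$ lies within a uniformly bounded neighborhood of $\phi^{m_s}$. Because the $\phi^{m_s}$ trace a quasi-geodesic arc of length at least $M_k / L$ inside $\aphi$, the image $\Proj_\aphi(B(w_k, r_k))$ has $d_\Sn$-diameter at least $M_k / L - 2(d_1 C + d_2) \geq c_n \log r_k$ for some constant $c_n > 0$ depending only on $\phi$ and the pair $(d_1, d_2)$, once $r_k$ is sufficiently large.

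The main obstacle is the combinatorial construction of the diamond sequence: one must simultaneously keep the entire configuration inside $B(w_k, r_k)$, track bounded $d_\Sn$-distance from a $\log r_k$-long arc of $\aphi$, and verify the strict separation $d_\Sn(w_k, \aphi) > r_k$. This last verification is the crux and is where the full strength of the Masur--Minsky subsurface-projection machinery, together with the specific choice of $n \gg 1$, is required to forbid any unanticipated short word in $\Sn$ from bringing $w_k$ back within distance $r_k$ of $\aphi$.
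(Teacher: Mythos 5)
The proposal has a genuine gap, and in fact a step that appears internally inconsistent. You place all the points $x_s$ within a \emph{bounded} $d_\Sn$-distance $C$ of the orbit $\{\phi^{m_s}\}$, and then take $w_k$ to be the combinatorial midpoint of the sequence $(x_s)$. But then $w_k$ is itself one of the $x_s$, hence within distance $C + \delta_n$ of $\aphi$ (using the fellow-travelling between the orbit $\Aphi$ and the geodesic quasi-axis $\aphi$). This directly contradicts the requirement that $B(w_k, r_k)$ be disjoint from $\aphi$, which forces $d_\Sn(w_k, \aphi) > r_k$ with $r_k \to \infty$. No appeal to Masur--Minsky can rescue this, because the contradiction is by the triangle inequality. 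The geometry has to be the opposite of what you describe: $w_k$ must be placed \emph{far} from the axis (the paper takes $w_k = D_{\alpha_1}^{m_k}$ with $m_k = n^k + \cdots + n + 1$), and the two ``witnesses'' that produce a large projection diameter must be points $p, q$ on the boundary sphere of $B(w_k, r_k)$, at the ends of two geodesics from $w_k$ that reach the axis at far-separated locations (near $\id$ and near $\phi^{p_k}$ with $p_k \geq k/5$). The large diameter comes from the fact that two near-extremal geodesics from $w_k$ to the axis land a distance $\gtrsim k$ apart on the axis, not from a long string of in-ball points tracking the axis.

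Beyond that, the proposal is missing the tool that makes the exact bookkeeping possible. You assert that ``diamond-shaped detour relations'' in the Cayley graph give a sequence with the right spread, and that the disjointness $d_\Sn(w_k, \aphi) > r_k$ follows from the Masur--Minsky distance formula and a careful choice of $n$. Neither claim is substantiated, and the second cannot work as stated: the distance formula controls word length only up to multiplicative and additive error depending on the generating set, whereas the argument needs the precise inequality $r_k < d_\Sn(w_k, \aphi) \leq r_k + O(1)$. The paper achieves exactly this precision with the homomorphism $h \colon \Map(S) \to \ZZ$ built from Luo's presentation (sending Dehn twists about curves separating two consecutive punctures to $+1$ and the others to $-1$), together with \lemref{cor1}, which converts $h$-values into \emph{exact} lower bounds on $\Sn$-word length. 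This homomorphism is what lets one compute $\|w_k \phi^{-(k+1)/5}\|_\Sn$ and $\|\phi^{k/5} u_k\|_\Sn$ on the nose (Proposition \ref{thm:geodesics}), which is the engine of the whole argument. Your choice $\phi = s_{i,j}$ could also be made pseudo-Anosov, but it is not what makes the proof work; what is missing is the exact word-length certificate and an explicit $w_k$ that is genuinely far from the axis while admitting two near-geodesic escapes to widely separated axis points.
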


We remark that, since $\aphi$ has the contracting property \cite{DR1}, 
the diameter of the projection can grow at most logarithmically with respect to the radius $r_k$
(see \corref{Cor:Diam-Proj}), hence the lower-bound achieved by the above
theorem is sharp. 

\subsection*{Outline of proof} 
To find an exact value for the word-length of an element $f \in \Map(S)$, 
we construct a homomorphism 
\[
h\from \Map(S) \to \mathbb{Z},
\]  
where a large value for $h(f)$ guarantees a large value for the word length of $f$. 
At times, this lower bound is realized and an explicit geodesic in $\Map(S)$ is 
constructed (see \secref{Sec:Homomorphism}). 
The pseudo-Anosov element $\phi$ is defined as 
\[
\phi = D_{\alpha_5}D_{\alpha_4}D_{\alpha_3}D_{\alpha_2}D_{\alpha_1}.
\]
In \secref{Sec:pA} we find an explicit invariant train-track for $\phi$ to show that
$\phi$ is a pseudo-Anosov. In \secref{Sec:Shadows}, we use the geodesics constructed in 
\secref{Sec:Homomorphism} to show that the shadows of geodesics in $\Map(S)$ are not
necessarily quasi-geodesics in the curve complex. In \secref{Sec:pA_axis}, we begin 
by showing that $\phi$ acts loxodromically on $\Map(S)$, that is, it has a quasi-axis 
$\aphi$ which fellow travels the path $\{ \phi^i \}$. We finish \secref{Sec:pA_axis} by 
showing that the bound in our main theorem is sharp. In \secref{Sec:Upper_bound}, 
we set up and complete the proof of \ref{Thm:Main-Theorem}.

\subsection*{Acknowledgements} We thank Sam Taylor for helpful conversations and 
Camille Horbez and Chris Leininger for their comments on an earlier version of this 
paper. Kasra Rafi was partially supported by NSERC Discovery grant RGPIN 06486.

\section{Finding Explicit Geodesics} \label{Sec:Homomorphism}

In this section, we develop the tools needed to show that certain paths in $\Map(S)$
are geodesics. We emphasize again that, in our paper, $S$ is the five-times punctured 
sphere and $\Map(S)$ is the pure mapping class group. That is, all 
homeomorphisms are required to fix the punctures 
point-wise. 

By a \emph{curve} on $S$ we mean a free homotopy class of a non-trivial, 
non-peripheral simple closed curve. 
Fix a labelling of the 5 punctures of $S$ with elements of 
$\ZZ_5$, the cyclic group of order 5. Any curve $\gamma$ on $S$ cuts the surface 
into two surfaces; one copy of $S_{0,3}$ containing two of the punctures from $S$ and one copy 
of $S_{0,4}$ which contains three of the punctures from $S$. 

\begin{definition} \label{Def:Type}
We say that a curve $\gamma$ on $S$ is an \emph{$(i,j)$--curve}, $i,j \in \ZZ_5$, if the 
component of $(S - \gamma)$ that is a three-times punctured sphere contains the punctures 
labeled $i$ and $j$. Furthermore, if $|i-j| \equiv 1 \mod 5$ we say that $\gamma$ 
\emph{separates two consecutive punctures}, and if $|i-j| \equiv 2\mod 5$ we say that 
\emph{$\gamma$ separates two non-consecutive punctures}. 
\end{definition}

In \cite{L1}, Luo gave a simple presentation of the mapping class group where
the generators are the set of all Dehn twists 
\[
\calS = \{ D_{\gamma} : \gamma \text{ is a curve} \}
\]
and the relations are of a few simple types. In our setting, we only have the following 
two relations:
\begin{itemize}
\item (Conjugating relation) For any two curves $\beta$ and $\gamma$, 
\[
D_{D_{\gamma}(\beta)} = D_{\gamma} D_{\beta} D_{\gamma}^{-1}.
\]
\item (The lantern relation) Let $i, j, k, l, m$ be distinct elements in $\ZZ_5$ and $\gamma_{i,j}$, $\gamma_{j,k}$
$\gamma_{k,i}$ and $\gamma_{l,m}$ be curves of the type indicated by the indices. 
Further assume that each pair of curves among 
$\gamma_{i,j}$, $\gamma_{j,k}$ and $\gamma_{k,i}$ intersect twice and that they are
all disjoint from $\gamma_{l,m}$. Then 
\[
D_{\gamma_{i,j}}D_{\gamma_{j,k}}D_{\gamma_{k,i}} = D_{\gamma_{l,m}}.
\]
\end{itemize}

Using this presentation, we construct a homomorphism from $\Map(S)$ into 
$\ZZ$.

\begin{theorem}\label{thm:homo}
There exists a homomorphism $h \from \Map(S) \to \ZZ$ whose
restriction to the generating set $\calS$ is as follows:
\begin{align*}
D_{\gamma} &\longmapsto 1 &&\text{ if } \gamma \text{ separates two consecutive punctures}\\
D_{\gamma} &\longmapsto -1 &&\text{ if } \gamma \text{ separates two non-consecutive punctures}
\end{align*}
\end{theorem}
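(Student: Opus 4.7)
The plan is to apply Luo's presentation of $\Map(S)$ recalled above. Since the generators are indexed by curves and we have an explicit rule assigning $\pm 1$ to each $D_\gamma$, the map $h$ is well-defined as a function on generators. It then suffices to verify that this assignment respects the two families of relations: the conjugating relations and the lantern relations.

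For the conjugating relation $D_{D_\gamma(\beta)} = D_\gamma D_\beta D_\gamma^{-1}$, the right-hand side is a conjugate and any homomorphism to an abelian group must send it to $h(D_\beta)$. So the requirement is that $D_\gamma(\beta)$ and $\beta$ have the same type in the sense of Definition \ref{Def:Type}. This reduces to the observation that we are in the pure mapping class group: every $D_\gamma$ fixes each puncture of $S$ individually, so the unordered pair of punctures on the three-times-punctured side of a curve is a mapping-class invariant of $\beta$. Hence $D_\gamma(\beta)$ is an $(i,j)$--curve exactly when $\beta$ is, and consecutiveness of $\{i,j\}$ in $\ZZ_5$ is preserved, so $h(D_{D_\gamma(\beta)}) = h(D_\beta)$.

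For the lantern relation $D_{\gamma_{i,j}} D_{\gamma_{j,k}} D_{\gamma_{k,i}} = D_{\gamma_{l,m}}$, one must verify
\[
h(D_{\gamma_{i,j}}) + h(D_{\gamma_{j,k}}) + h(D_{\gamma_{k,i}}) = h(D_{\gamma_{l,m}}).
\]
Because $\{i,j,k,l,m\}=\ZZ_5$, this is a finite combinatorial check over the partitions of $\ZZ_5$ into an unordered triple $\{i,j,k\}$ and a complementary pair $\{l,m\}$. I would organize it by how many of the three pairs $\{i,j\}, \{j,k\}, \{k,i\}$ are consecutive in $\ZZ_5$, and note that the parity/count of consecutive pairs among these three is forced by whether $\{l,m\}$ is consecutive or not. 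For instance, when $\{l,m\}$ is consecutive, two of $\{i,j\},\{j,k\},\{k,i\}$ are consecutive and one is not (giving $1+1-1=1$); when $\{l,m\}$ is non-consecutive, either one or three of them are consecutive, and in both sub-cases the sum equals $-1$. A brief enumeration of the $\binom{5}{2}=10$ choices of $\{l,m\}$ (or, by symmetry under the dihedral action on $\ZZ_5$, a couple of representative cases) confirms this.

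The main (and only) obstacle is the bookkeeping of the lantern case analysis; there is nothing subtle topologically. Once both relations are checked, Luo's presentation yields a well-defined homomorphism $h\from \Map(S)\to \ZZ$, and by construction its values on the generators $D_\gamma$ agree with the prescription in the statement.
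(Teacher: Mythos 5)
Your approach is the same as the paper's: invoke Luo's presentation, observe that $h$ automatically kills the conjugating relation because the target is abelian and the type of a curve in the sense of Definition~\ref{Def:Type} is preserved by the pure mapping class group, then verify the lantern relation by a finite case check.

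However, your handling of the lantern case check has a small error. You write that when $\{l,m\}$ is non-consecutive, ``either one or three of them are consecutive, and in both sub-cases the sum equals $-1$.'' If three of the pairs $\{i,j\},\{j,k\},\{k,i\}$ were consecutive, the sum would be $1+1+1=3$, not $-1$, so that sub-case would actually break the verification. What saves you is that the sub-case cannot occur: no three-element subset of $\ZZ_5$ has all three of its pairs consecutive (a short enumeration, or the paper's pigeonhole observation that among any three labels in $\ZZ_5$ exactly one or two pairs are consecutive, shows this). The paper's organization avoids the issue by starting from the fact that at least one pair among $\{i,j\},\{j,k\},\{k,i\}$ is consecutive and then splitting on whether a second one is, which yields exactly the two genuine cases (two-consecutive with consecutive complement, one-consecutive with non-consecutive complement). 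You should either rule out the three-consecutive sub-case explicitly or restate the enumeration so that it does not appear.
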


\begin{proof}
To show that $h$ extends to a homomorphism, it suffices to show that $h$ preserves 
the relations stated above.

First, we check the conjugating relation. Let $\beta$ and $\gamma$ be a pair of curves.
Since, $D_\gamma$ is a homeomorphism fixing the punctures, 
if $\beta$ is an $(i,j)$--curve, so is $D_{\gamma}(\beta)$. In particular, 
$h(D_{D_{\gamma}(\beta)}) = h(\beta)$.  Hence, 
\begin{align*}
h\big(D_{D_{\gamma}(\beta)}\big) = h(D_{\beta})
&= h(D_{\gamma}) + h(D_{\beta}) - h(D_{\gamma})\\ 
&= h(D_{\gamma}) + h(D_{\beta}) + h\big(D_{\gamma}^{-1}\big)
= h\big(D_{\gamma} D_{\beta} D_{\gamma}^{-1}\big). 
\end{align*}

We now show that $h$ preserves the lantern relation. For any three punctures of 
$S$ labeled $i , j, k \in \ZZ_5$, two of these punctures are consecutive. 
Without loss of generality, suppose $|i-j| = 1 \mod 5$. There are two cases:
\begin{enumerate}
\item Assume $k$ is consecutive to one of $i$ or $j$. That is, without loss of generality, 
suppose $|j-k| = 1 \mod 5$. Then $|i-k| = 2 \mod 5$ and the remaining two punctures, 
$l$ and $m$, are consecutive: $|l-m| = 1 \mod 5$. Thus
\begin{align*}
h(D_{\gamma_{i,j}}D_{\gamma_{j,k}}D_{\gamma_{k,i}}) &= h(D_{\gamma_{i,j}})+h(D_{\gamma_{j,k}})+h(D_{\gamma_{k,i}})\\
&=1 + 1 + (-1)\\
&=1 = h(D_{\gamma_{l,m}}).
\end{align*}

\item Otherwise, $|j-k| = 2 \mod 5$ and $|i-k| = 2 \mod 5$, so that the remaining two punctures, $l$ and $m$, are nonconsecutive: $|l-m| = 2 \mod 5$. Thus
\begin{align*}
h(D_{\gamma_{i,j}}D_{\gamma_{j,k}}D_{\gamma_{k,i}}) &= h(D_{\gamma_{i,j}})+h(D_{\gamma_{j,k}})+h(D_{\gamma_{k,i}})\\
&=1 + (-1) + (-1)\\
&=(-1)= h(D_{\gamma_{l,m}}).
\end{align*}

\end{enumerate}
Thus, $h$ preserves the lantern relation.
\end{proof}

Now, we switch back to the generating set $\Sn$ given in the introduction.
The homomorphism of \thmref{thm:homo} gives a 
lower bound on the word length of elements in $\Map(S)$.
Note that 
\[
h(s_{i,j}) = (n-1) \qquad\text{and}\qquad h(D_{\alpha_i}) = 1.
\] 

\begin{lemma}\label{cor1}
For any $f \in \Map(S)$, let 
\[
h(f)=q(n-1)+r
\]
for integer numbers $q$ and $r$ where
 $0 \leq \abs{r} < \frac{n-1}{2}$. Then $\norm{f}_\Sn \geq |q|+|r|$.
\end{lemma}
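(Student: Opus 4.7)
The plan is to exploit that $h$ is a homomorphism: for any word $f = g_1 \cdots g_m$ with each $g_i \in \Sn \cup \Sn^{-1}$, we have $h(f) = \sum_i h(g_i)$, and by the values computed just before the lemma, each $h(g_i)$ lies in $\{\pm 1, \pm(n-1)\}$. This is precisely the arithmetic structure of the representation $h(f) = q(n-1) + r$, which is what makes $h$ a useful lower bound for $\norm{f}_\Sn$.

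Take $f = g_1 \cdots g_m$ to be a geodesic word, so that $m = \norm{f}_\Sn$, and let $a, b, c, d$ be the numbers of letters with $h$-value equal to $n-1$, $-(n-1)$, $1$, $-1$, respectively. Then
\begin{equation*}
m = a + b + c + d \qquad \text{and} \qquad h(f) = (a-b)(n-1) + (c-d).
\end{equation*}
Comparing this with the given expression $h(f) = q(n-1) + r$, I would set $k = q - (a-b)$, so that $c - d = k(n-1) + r$.

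By the triangle inequality, $|q| \leq |a-b| + |k| \leq (a+b) + |k|$, so the target inequality $|q| + |r| \leq m$ reduces to showing $|k| + |r| \leq c + d$. Since $c + d \geq |c-d| = |k(n-1) + r|$, I would split into cases. If $k = 0$, then $|r| = |c-d| \leq c+d$, finishing the argument. If $k \neq 0$, the reverse triangle inequality yields $|k(n-1) + r| \geq |k|(n-1) - |r|$, and the desired bound becomes $|k|(n-2) \geq 2|r|$. Combining the hypothesis $|r| < (n-1)/2$ with the integrality of $|r|$ forces $2|r| \leq n-2$, so the inequality follows as soon as $|k| \geq 1$.

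The one delicate point is the use of integrality in the last step: one needs the hypothesis $|r| < (n-1)/2$ to be \emph{strict} in order to conclude $2|r| \leq n-2$ for integer $|r|$, and this is exactly what closes the borderline case $|k| = 1$. No serious obstacle is expected; once the counts $a,b,c,d$ and the auxiliary integer $k$ are introduced, the whole argument is a short combinatorial/arithmetic manipulation built on the homomorphism $h$.
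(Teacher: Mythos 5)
Your proof is correct and follows essentially the same strategy as the paper's: apply the homomorphism $h$ to a geodesic word, separate the contributions $\pm(n-1)$ from $\pm 1$, and use the integrality of $r$ together with the strict bound $|r|<\frac{n-1}{2}$ to conclude. The only organizational difference is that the paper first isolates the claim that $(q,r)$ minimizes $|a|+|b|$ over all integer representations $h(f)=a(n-1)+b$ (proved by a short descent argument), whereas you fold that arithmetic into a direct triangle-inequality estimate with a case analysis on $k=q-(a-b)$; both resolve the borderline case by the observation that $|r|<\frac{n-1}{2}$ with $|r|\in\ZZ$ forces $2|r|\le n-2$.
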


\begin{proof}
First we show that, if $h(f) = a(n-1)+b$ for integers $a$ and $b$, then 
$|a|+|b| \geq |q| + |r|$. To see this, consider such a pair $a$ and $b$ where
$|a|+|b|$ is minimized. If $a<q$, then 
$|b| \geq |(n-1) +r| > \frac{n-1}{2}$. Therefore, we can increase $a$ by $1$ and decrease 
$b$ by $n-1$ to decrease the quantity $|a|+|b|$, which is a contradiction. 
Similarly, if $a>q$, then $|b| > (n-1)/2$ and we can decrease $a$ by $1$ and 
increase $b$ by $n-1$ to decrease the quantity $|a|+|b|$, which again is a contradiction. 
Hence, $a=q$ and subsequently $b=r$. 

Now, write $f = g_1 g_2 \ldots g_k$, where $g_i \in \Sn$ or $g_i^{-1} \in \Sn$
and $k= \norm{f}_\Sn$. For each $g_i$, $h(g_i)$ takes either value 
$1$, $(-1)$, $(n-1)$ or $(1-n)$. Hence, there are integers $a'$ and $b'$
so that
\[
h(f) = h(g_1) + h(g_2) + \ldots + h(g_k) = a'(n-1) +b',
\]
where $k \geq |a'|+|b'|$. But, as we saw before, we also have $|a'|+|b'| \geq |q| + |r|$. 
Hence $k \geq |q| + |r|$. 
\end{proof}

This lemma allows us to find explicit geodesics in $\Map(S)$. We demonstrate this
with an example. 

\begin{example}\label{example1}
Let $f = D_{\alpha_1}^{n^k-1} \in \Map(S)$. We have, 
\[
h(f) = n^k-1 = (n-1) (n^{k-1} + n^{k-2} + \dots + n^2+n+1).
\]
Therefore, by Lemma \ref{cor1} $\norm{f}_\Sn \geq n^{k-1} + n^{k-2} + \dots + n^2+n+1$. 
On the other hand, (assuming $k$ is even to simplify notation), we have
\begin{align*}
D_{\alpha_1}^{n^k-1} 
& = \left(D_{\alpha_1}^{n^k} D_{\alpha_2}^{-n^{(k-1)}}\right) \!
      \left(D_{\alpha_2}^{n^{(k-1)}} D_{\alpha_1}^{-n^{(k-2)}}\right) \dots
      \left(D_{\alpha_1}^{n^2} D_{\alpha_2}^{-n}\right)  \!
      \Big(D_{\alpha_2}^{n} D_{\alpha_1}^{-1}\Big) \\
&= s_{1,2}^{n^{(k-1)}} \, s_{2,1}^{n^{(k-2)}} \dots s_{1,2}^{n} \, s_{2,1}.
\end{align*}
Since we used exactly $(n^{k-1} + n^{k-2} + \ldots +n+1)$ elements in $\Sn$, 
we have found a geodesic path. However, notice there is a second geodesic path from 
the identity to $f$ (which works for every $k$), namely:
\begin{align*}
D_{\alpha_1}^{n^k-1} 
&= \left(D_{\alpha_1}^{n^k} D_{\alpha_2}^{-n^{(k-1)}}\right) \!
      \left(D_{\alpha_2}^{n^{(k-1)}} D_{\alpha_3}^{-n^{(k-2)}}\right) \! \dots 
      \left(D_{\alpha_{(k-1)}}^{n^2} D_{\alpha_k}^{-n}\right) \!
      \Big(D_{\alpha_k}^{n} D_{\alpha_{(k+1)}}^{-1}\Big)\\
&= s_{1,2}^{n^{(k-1)}} \, s_{2,3}^{n^{(k-2)}} \dots s_{(k-1),k}^{n} \, s_{k,k+1}.
\end{align*}
This shows that geodesics are not unique in $\Map(S)$. Either way, we have
established that 
\begin{equation} \label{Eq:Twist-Norm} 
\norm{D_{\alpha_1}^{n^k-1} }_\Sn = n^{(k-1)} + n^{(k-2)} + \dots + n + 1. 
\end{equation}
\end{example}

We now use a similar method to compute certain word lengths that will be useful later 
in the paper. Define 
\[
\phi = D_{\alpha_5} D_{\alpha_4} D_{\alpha_3} D_{\alpha_2}D_{\alpha_1}.
\]
We will show in the next section that $\phi$ is a pseudo-Anosov element of $\Map(S)$. 
We also use the notation 
\[
\phi^{k/5} = D_{\alpha_k} D_{\alpha_{k-1}} \dots D_{\alpha_1}
\]
where indices are considered to be in $\ZZ_5$. This is accurate when 
$k$ is divisible by $5$ but we use it for any integer $k$. 
For a positive integer $k$, define 
\[
m_k = n^{k} + n^{k-1} + \ldots + n + 1
\qquad\text{and}\qquad
\ell_k = n^k - n^{k-1} - n^{k-2} - \ldots - n -1,
\]
and let $w_k = D_{\alpha_1}^{m_k}$ and $u_k = D_{\alpha_1}^{\ell_k}$. Additionally,
we will define
\[
v_k = D_{\alpha_1}^{- \frac{k+1}2} D_{\alpha_2}^{- \frac{k+1}2}.
\] 
We will show that $u_k$ and $w_k$ are closer to a large power of $\phi$ than 
the identity even though they are both just a power of a Dehn twist. 

\begin{proposition}\label{thm:geodesics}
For $u_k$ and $w_k$ as above, we have
\[
\Norm{w_k \, \phi^{-(k+1)/5}}_\Sn=
\left \Vert w_k \, v_k \right\Vert_\Sn=
n^{k-1}+2n^{k-2}+ \ldots + (k-1)n + k,
\] 
and
\[
\Norm{\phi^{k/5}\, u_k }_\Sn=n^{k-1} - n^{k-3} - 2n^{k-4} - \ldots - (k-3)n - (k-2) + 1.
\] 
\end{proposition}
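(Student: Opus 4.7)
The plan is to prove each equality in the standard way suggested by \lemref{cor1} and \exaref{example1}: obtain a lower bound via the homomorphism $h$, then match it by an explicit word in $\Sn$ of the same length.

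For the lower bounds, I would compute $h$ on each element using $h(D_{\alpha_i})=1$, which gives $h(\phi^{k/5})=k$, $h(w_k)=m_k$, $h(u_k)=\ell_k$, and $h(v_k)=-(k+1)$. Thus $h(w_k\phi^{-(k+1)/5})=h(w_kv_k)=m_k-(k+1)$ and $h(\phi^{k/5}u_k)=k+\ell_k$. The key algebraic identity
\[
(n-1)\sum_{j=1}^{k}j\,n^{k-j} = \sum_{j=1}^{k} n^j - k
\]
yields $(n-1)N_k = m_k-(k+1)$, where $N_k = n^{k-1}+2n^{k-2}+\ldots+(k-1)n+k$ is the claimed first word length. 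By \lemref{cor1} (with $r=0$) this gives $\norm{\,\cdot\,}_{\Sn}\geq N_k$ for both $w_k\phi^{-(k+1)/5}$ and $w_kv_k$. A parallel manipulation writes $k+\ell_k=(n-1)(M_k-1)+1$, where $M_k$ is the claimed third word length, so \lemref{cor1} with $q=M_k-1$ and $r=1$ produces the matching lower bound $|q|+|r|=M_k$.

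For the upper bounds, I would construct explicit words in $\Sn$ realizing these lengths via a nested telescoping modeled on the second geodesic of \exaref{example1}. For $w_k\phi^{-(k+1)/5}=D_{\alpha_1}^{m_k-1}D_{\alpha_2}^{-1}\cdots D_{\alpha_{k+1}}^{-1}$ the idea is to exploit the geometric structure of $m_k=1+n+\ldots+n^k$: the outermost block of $n^{k-1}$ copies of $s_{1,2}$ accounts for $D_{\alpha_1}^{n^k}$ paired with the leading $D_{\alpha_2}^{-1}$, and each subsequent nested block of length $j\cdot n^{k-j}$ (built from $s_{i,i+1}$ generators and some Dehn twists) handles the next geometric term $D_{\alpha_1}^{n^{k-j+1}}$ while absorbing the next Dehn-twist inverse from the tail; the total count is $\sum_{j=1}^k j\cdot n^{k-j}=N_k$. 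The analogous factorization of $w_kv_k$ is the same construction restricted to the sub-alphabet $\{D_{\alpha_1}^{\pm 1},D_{\alpha_2}^{\pm 1},s_{1,2}^{\pm 1},s_{2,1}^{\pm 1}\}$ because $v_k$ is supported on $\alpha_1,\alpha_2$. For $\phi^{k/5}u_k$ the cyclic structure of $\phi^{k/5}=D_{\alpha_k}\cdots D_{\alpha_1}$ together with $\ell_k=n^k-m_{k-1}$ leads to an analogous nested decomposition; the trailing $+1$ in $M_k$ corresponds to one leftover generator reflecting the nonzero remainder $r=1$ in the lower-bound calculation.

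The main obstacle is verifying that these telescoping factorizations actually equal the target elements in $\Map(S)$. Since consecutive curves $\alpha_i,\alpha_{i+1}$ intersect, the Dehn twists $D_{\alpha_i}, D_{\alpha_{i+1}}$ do not commute, so the intermediate cancellations cannot be read off as powers of single $s$-generators and must instead be justified using the relations of $\Map(S)$---in particular the conjugating relation and the commutation of $D_{\alpha_i},D_{\alpha_j}$ when $|i-j|\equiv 2\pmod 5$. I would execute this by induction on $k$ using the recurrence $w_k\phi^{-(k+1)/5} = D_{\alpha_1}^{n^k}\cdot w_{k-1}\phi^{-k/5}\cdot D_{\alpha_{k+1}}^{-1}$, and its analogues for the other two elements, with careful attention at the boundary between the outer block and the inductive factorization so that one generator is saved and the overall count matches $N_k$ (respectively $M_k$) on the nose rather than being off by one.
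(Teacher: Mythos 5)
Your lower-bound argument is correct and follows the paper's approach exactly: compute $h$ on each element, verify the algebraic identity showing the value is $(n-1)N_k$ (resp.\ $(n-1)(M_k-1)+1$), and apply \lemref{cor1}.

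However, your upper-bound plan rests on a factually incorrect premise that also inverts the difficulty of the problem. You assert that consecutive curves $\alpha_i, \alpha_{i+1}$ intersect, so that $D_{\alpha_i}$ and $D_{\alpha_{i+1}}$ do not commute, and that $D_{\alpha_i}, D_{\alpha_j}$ commute when $|i-j|\equiv 2\pmod 5$. This is exactly backwards. In the configuration used in the paper, consecutive curves $\alpha_i$ and $\alpha_{i+1}$ (indices mod $5$) are \emph{disjoint}, so $D_{\alpha_i}$ and $D_{\alpha_{i+1}}$ \emph{commute}; it is the pairs $\alpha_i, \alpha_{i+2}$ that intersect. (This is required, e.g., for the identity $s_{i,i+1}^{-1}\alpha_i=\alpha_i$ used in \lemref{lemma: cc to map}, and it is consistent with $h(D_{\alpha_i})=1$ via a labeling such as $\alpha_i$ being the $(2i-1,2i)$--curve with indices mod $5$.) The ``main obstacle'' you identify therefore does not exist: since $D_{\alpha_1}$ and $D_{\alpha_2}$ commute, $s_{1,2}^m = D_{\alpha_1}^{nm}D_{\alpha_2}^{-m}$ holds literally, and the paper's proof simply writes down the word directly using the telescoping identity $m_k-1 = n\,m_{k-1}$ (resp.\ $\ell_k+1=n\,\ell_{k-1}$):
\[
w_k\,\phi^{-(k+1)/5} = s_{1,2}^{m_{k-1}}\,s_{2,3}^{m_{k-2}}\cdots s_{k-1,k}^{m_1}\,s_{k,k+1},
\]
with total length $m_{k-1}+\cdots+m_1+1 = N_k$, and similarly for the other two elements. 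Note also that the paper's block lengths are the $m_j$, not $j\cdot n^{k-j}$ as you propose (the totals agree, but the per-block decomposition does not). Your proposed induction via the recurrence $w_k\phi^{-(k+1)/5}=D_{\alpha_1}^{n^k}\cdot w_{k-1}\phi^{-k/5}\cdot D_{\alpha_{k+1}}^{-1}$ is not how the paper proceeds, and it is not clear the index shifts in the inductive word would combine correctly; a direct non-inductive telescoping, once you know the correct commutations, is much simpler and is what the paper does.
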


\begin{proof}
Note that
\begin{align*}
h\Big(w_k \phi^{-(k+1)/5}\Big)&= (n^{k}+n^{k-1}+ \ldots + n+ 1) - (k+1)\\
 &= (n-1)(n^{k-1}+2n^{k-2}+ \ldots + (k-1)n +k).
\end{align*}
Lemma \ref{cor1} implies that
\[
\Norm{w_k \phi^{-(k+1)/5}}_\Sn \geq n^{k-1}+2n^{k-2}+ \ldots + (k-1)n + k.
\]
On the other hand, since $m_k -1 = n \, m_{k-1}$, we have
\begin{align*}
w_k \, \phi^{-(k+1)/5}  &= D_{\alpha_1}^{m_k} 
       \big( D_{\alpha_1}^{-1} D_{\alpha_2}^{-1} \dots D_{\alpha_{k+1}}^{-1}\big)\\
  &= D_{\alpha_1}^{(m_k-1)}  
       \big(D_{\alpha_2}^{-1} D_{\alpha_3}^{-1} \dots D_{\alpha_{k+1}}^{-1}\big)\\  
  &= s_{1,2}^{m_{k-1}} D_{\alpha_2}^{m_{k-1}} 
       \big( D_{\alpha_2}^{-1} D_{\alpha_2}^{-1} \dots D_{\alpha_{k+1}}^{-1}\big)\\
  &= s_{1,2}^{m_{k-1}} D_{\alpha_1}^{(m_{k-1}-1)}  
       \big(D_{\alpha_3}^{-1} D_{\alpha_4}^{-1} \dots D_{\alpha_{k+1}}^{-1}\big)\\  
  &= s_{1,2}^{m_{k-1}} \, s_{2,3}^{m_{k-2}} \, D_{\alpha_3}^{m_{k-2}} 
       \big( D_{\alpha_3}^{-1} D_{\alpha_4}^{-1} \dots D_{\alpha_{k+1}}^{-1}\big)\\
  & \qquad \ldots \\
  &= s_{1,2}^{m_{k-1}} \, s_{2,3}^{m_{k-2}}   \dots s_{k-1,k}^{m_1}\,  s_{k, k+1}. 
\end{align*} 
Therefore, 
\begin{equation*}
\norm{w_k \, \phi^{-(k+1)/5} }_\Sn = m_{k-1} + \dots + m_1 + 1 = 
    n^{k-1}+2n^{k-2}+ \ldots (k-1)n + k.
\end{equation*}
To show that
\[
\left \Vert w_k \, v_k \right\Vert_\Sn=
n^{k-1}+2n^{k-2}+ \ldots (k-1)n + k
\]
is as above, but in place of applying $s_{i,i+1}$ for $1 \leq i \leq k$, we alternate between applying $s_{1,2}$ and $s_{2,1}$ to find
\[
w_k \, v_k  = s_{1,2}^{m_{k-1}} \, s_{2,1}^{m_{k-2}}   \dots s_{1,2}^{m_1}\,  s_{2, 1},
\]
which proves our claim. Similarly, we have
\begin{align*}
h\Big(\phi^{k/5} u_k\Big) &= k+ \big(n^k - n^{k-1} + \ldots - n - 1\big) \\
&= (n-1)\big(n^{k-1} - n^{k-3} - 2n^{k-4} - \ldots - (k-3)n - (k-2)\big) +1,
\end{align*}
and Lemma \ref{cor1} implies
\begin{equation*}
\norm{\phi^{k/5} u_k}_\Sn \geq n^{k-1} - n^{k-3} - 2n^{k-4} - \ldots - (k-3)n - (k-2) + 1.
\end{equation*}

On the other hand, since $\ell_k +1 = n \ell_{k-1}$, we have
\begin{align*}
\phi^{k/5} u_k  
  &= \big( D_{\alpha_k} \dots D_{\alpha_2} D_{\alpha_1} \big) D_{\alpha_1}^{\ell_k} \\
  &= \big(D_{\alpha_k}\dots D_{\alpha_3} D_{\alpha_{2}}\big) D_{\alpha_1}^{(\ell_k+1)}\\  
  &= \big(D_{\alpha_k}\dots D_{\alpha_3} D_{\alpha_{2}}\big) 
      D_{\alpha_2}^{\ell_{k-1}}\, s_{1,2}^{\ell_{k-1}} \\  
  &= \big(D_{\alpha_k}\dots D_{\alpha_4} D_{\alpha_{3}}\big) 
      D_{\alpha_2}^{(\ell_{k-1}+1)}\, s_{1,2}^{\ell_{k-1}} \\ 
  &= \big(D_{\alpha_k}\dots D_{\alpha_4} D_{\alpha_{3}}\big) 
      D_{\alpha_3}^{\ell_{k-2}} \, s_{2,3}^{\ell_{k-2}} \, s_{1,2}^{\ell_{k-1}} \\        
  & \qquad \ldots \\
  &= D_{\alpha_k} D_{\alpha_k}^{\ell_1}\, 
    s_{k-1, k}^{\ell_1} \dots s_{2,3}^{\ell_{k-2}} \, s_{1,2}^{\ell_{k-1}} \\   
  &= t_{k+1} \, s_{k,k+1} \, s_{k-1, k}^{\ell_1} \dots s_{2,3}^{\ell_{k-2}} \, s_{1, 2}^{\ell_{k-1}}. 
\end{align*} 
Therefore, 
\begin{align*}
\norm{u_k \phi^{k/5}}_\Sn 
   &=\ell_{k-1} + \dots \ell_1 + 2\\
   & =  n^{k-1} - n^{k-3} - 2n^{k-4} - \ldots - (k-3)n - (k-2) + 1.
\end{align*}
This is because the coefficient of $n^i$ is $1$ is $\ell_i$ and is $(-1)$ in 
$\ell_k, \dots, \ell_{i+1}$. Summing up, we get $-(k-i-1)$ as the coefficient of $n^i$. 
\end{proof}


\begin{figure}[ht]
\setlength{\unitlength}{0.01\linewidth}
\begin{picture}(100,120)
\put(5,95){\includegraphics[width=25\unitlength]{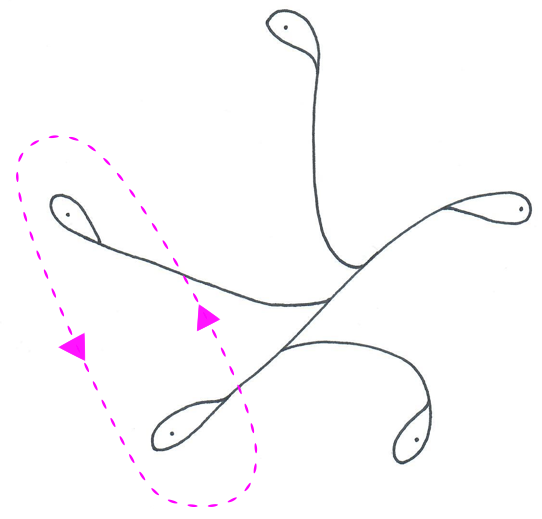}}
\put(38,97){\includegraphics[width=23\unitlength]{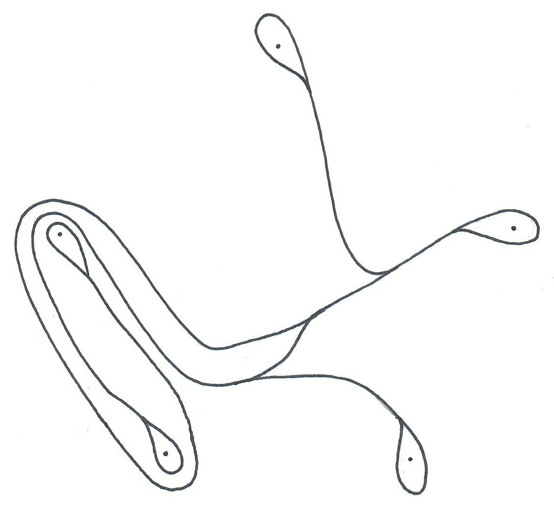}}
\put(70,95){\includegraphics[width=25\unitlength]{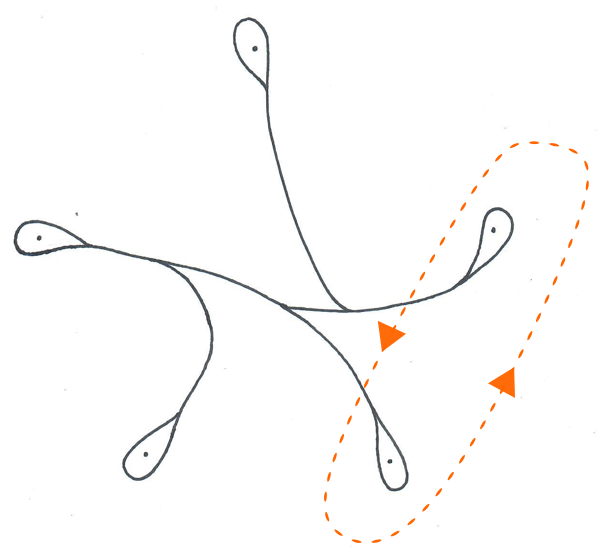}}
\put(72,66){\includegraphics[width=23\unitlength]{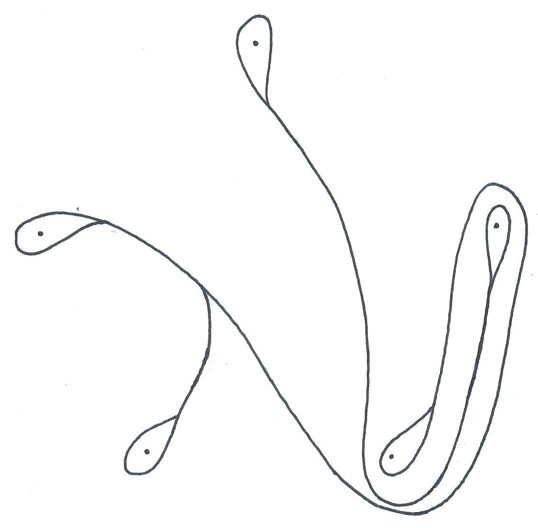}}
\put(38,66){\includegraphics[width=27\unitlength]{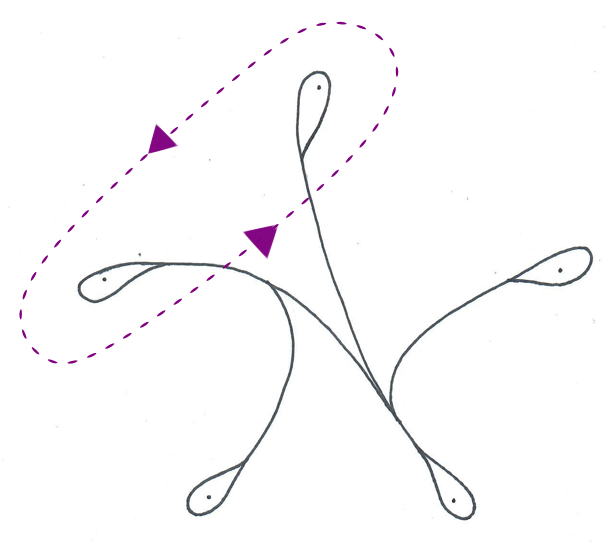}}
\put(7,66){\includegraphics[width=23\unitlength]{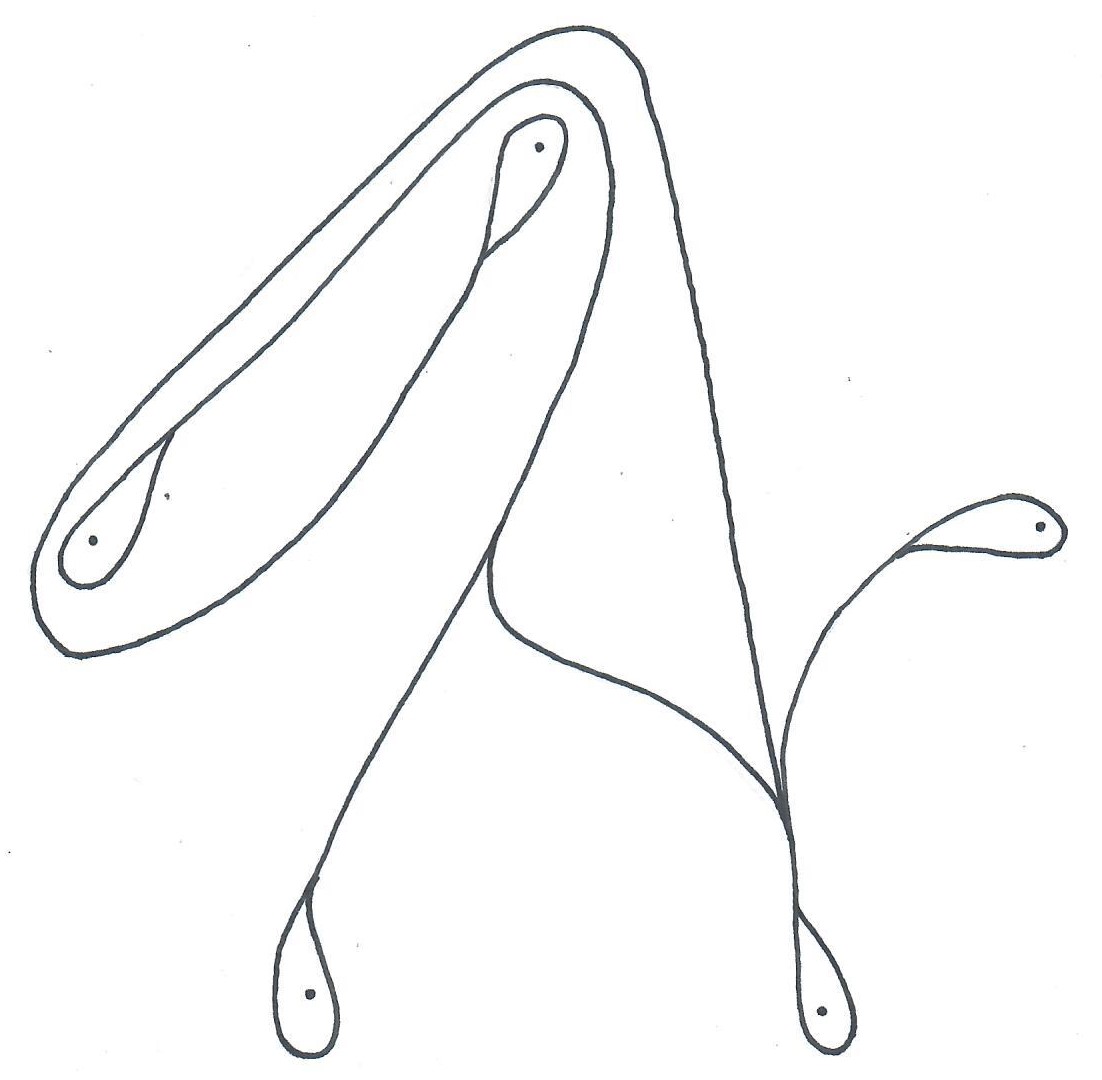}}
\put(5,32){\includegraphics[width=26\unitlength]{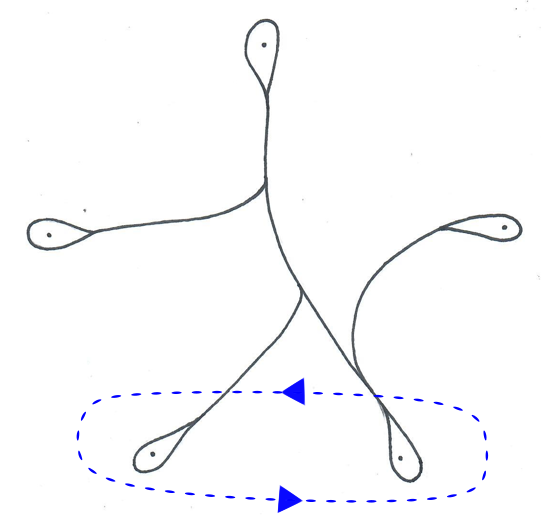}}
\put(38,33){\includegraphics[width=25\unitlength]{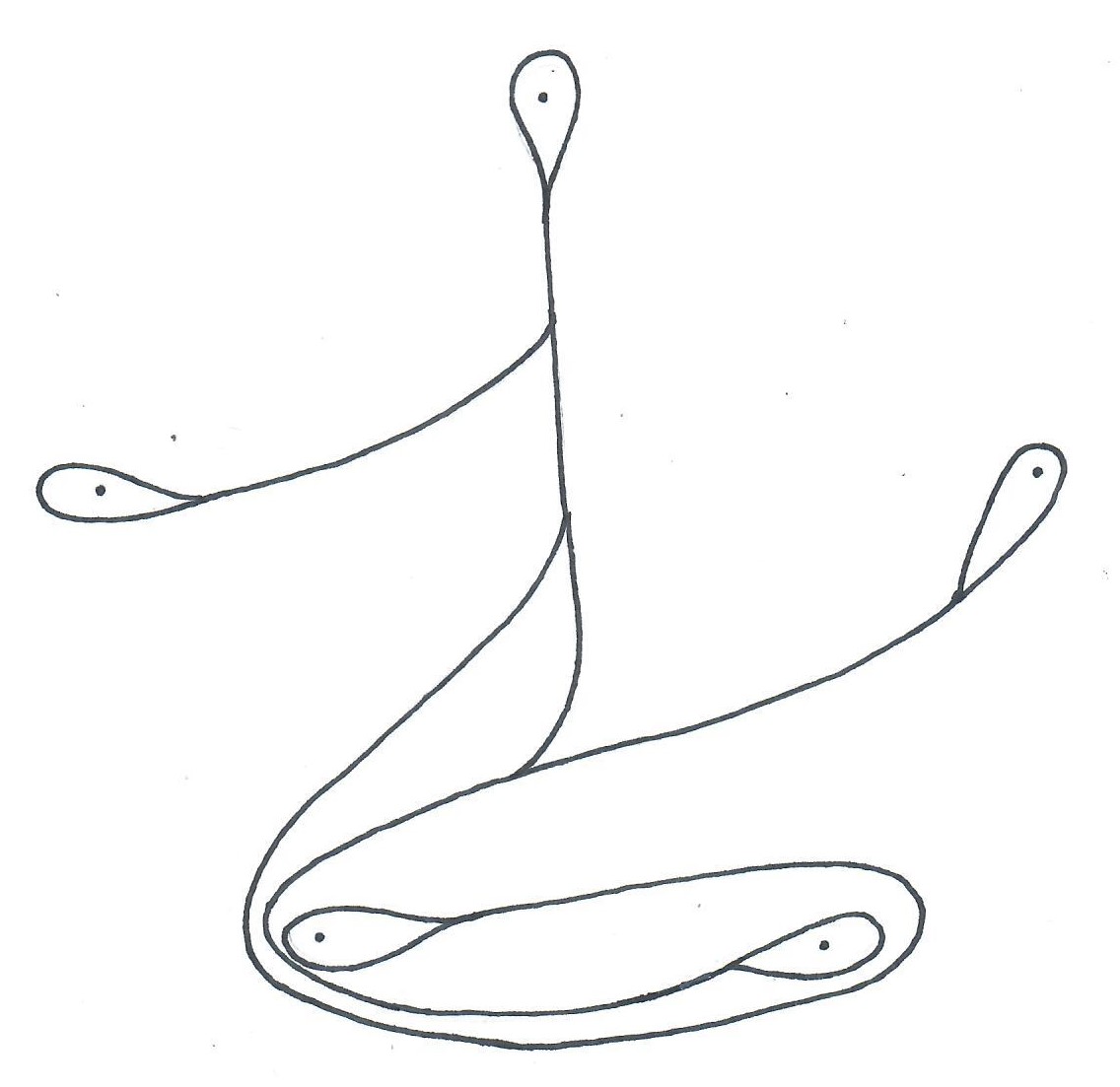}}
\put(70,34){\includegraphics[width=28\unitlength]{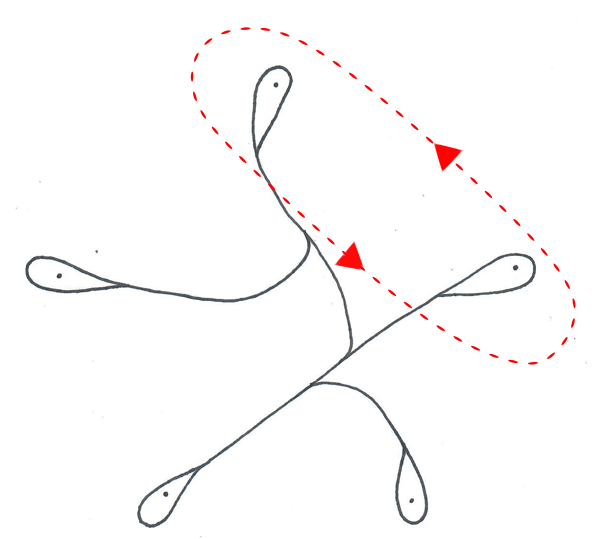}}
\put(70,3){\includegraphics[width=25\unitlength]{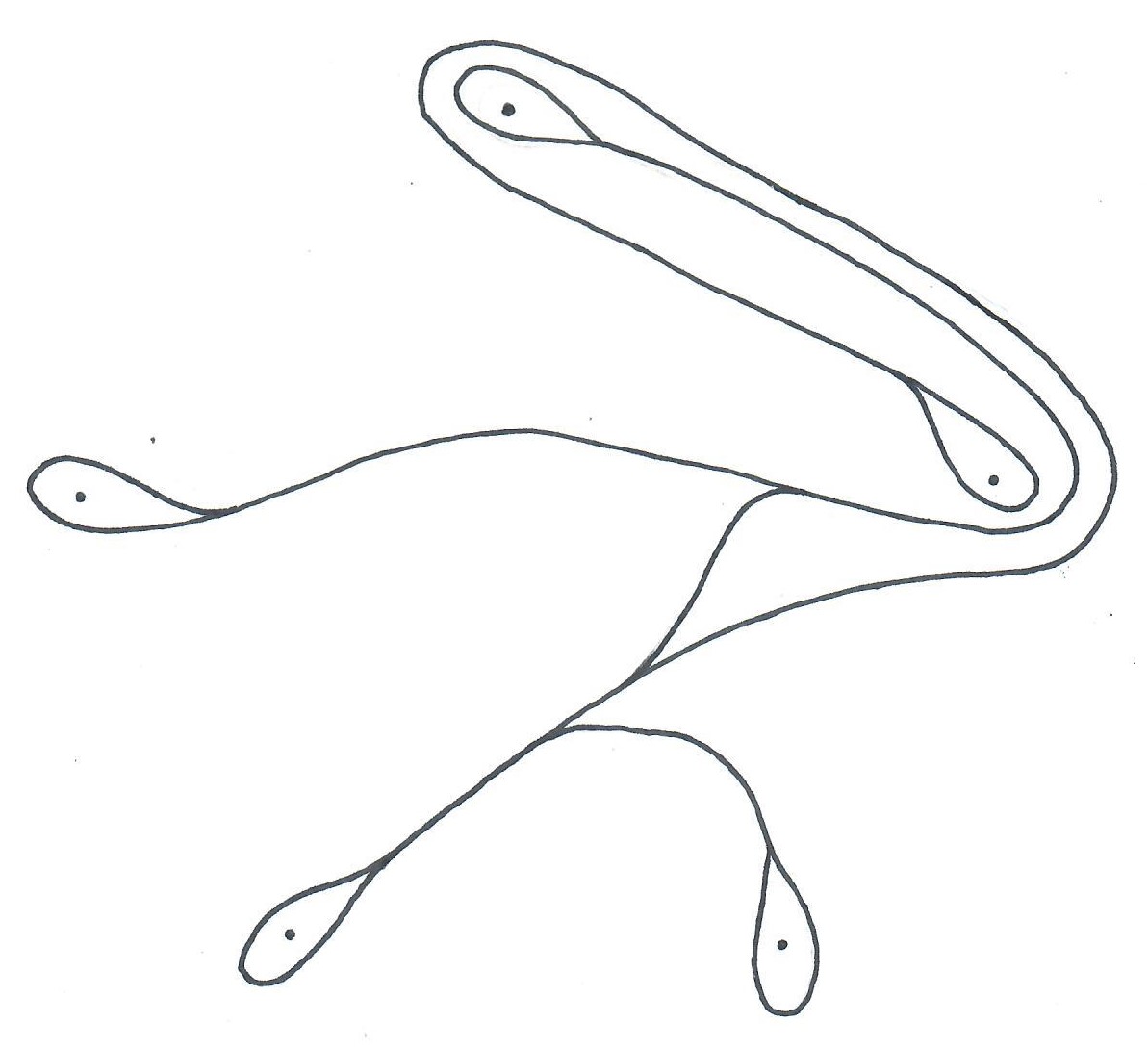}}
\put(38,3){\includegraphics[width=25\unitlength]{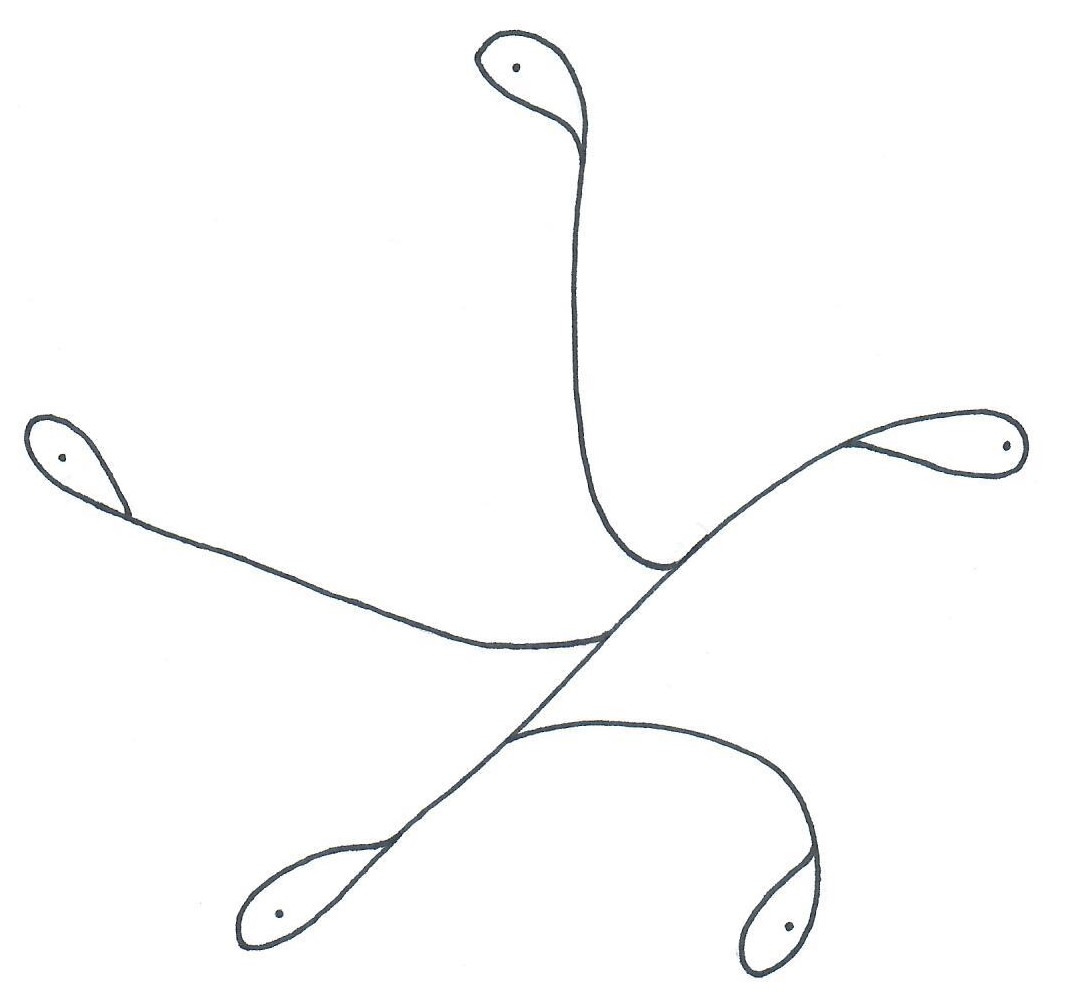}}

\put(7,111){\tiny{$a$}}
\put(14,96){\tiny{$b$}}
\put(25,97){\tiny{$c$}}
\put(29,111){\tiny{$d$}}
\put(17, 119){\tiny{$e$}} 

\put(30,105){$\longrightarrow$}

\put(34,111){\tiny{$3a+2b$}}
\put(39,95){\tiny{$2a+b$}}
\put(56,96){\tiny{$c$}}
\put(59,111){\tiny{$d$}}
\put(47, 119){\tiny{$e$}}

\put(64,105){$\longrightarrow$}

\put(67,111){\tiny{$3a+2b$}}
\put(70,96){\tiny{$2a+b$}}
\put(89,97){\tiny{$c$}}
\put(92,110){\tiny{$d$}}
\put(83, 119){\tiny{$e$}}

\put(81,93){$\big\downarrow$}

\put(70,80){\tiny{$3a+2b$}}
\put(73,64){\tiny{$2a+b$}}
\put(90,64){\tiny{$3c+2d$}}
\put(92,81){\tiny{$2c+d$}}
\put(83, 89){\tiny{$e$}}

\put(66,74){$\longleftarrow$}

\put(38,79){\tiny{$3a+2b$}}
\put(42,64){\tiny{$2a+b$}}
\put(55,64){\tiny{$3c+2d$}}
\put(59,80){\tiny{$2c+d$}}
\put(52, 88){\tiny{$e$}} 

\put(32,74){$\longleftarrow$}

\put(0,73){\tiny{$3a+2b+2e$}}
\put(7,64){\tiny{$2a+b$}}
\put(22,64){\tiny{$3c+2d$}}
\put(26,80){\tiny{$2c+d$}}
\put(11, 89){\tiny{$6a+4b+3e$}} 

\put(17,62){$\big\downarrow$}

\put(0,44){\tiny{$3a+2b+2e$}}
\put(7,33){\tiny{$2a+b$}}
\put(22,32){\tiny{$3c+2d$}}
\put(26,48){\tiny{$2c+d$}}
\put(6, 55){\tiny{$6a+4b+3e$}} 

\put(32,43){$\longrightarrow$}

\put(35,450){\tiny{$3a+2b+2e$}}
\put(35,33){\tiny{$6a+3b+ 6c+4d$}}
\put(52,39){\tiny{$4a+2b+3c+2d$}}
\put(58,50){\tiny{$2c+d$}}
\put(39, 56){\tiny{$6a+4b+3e$}} 

\put(65,43){$\longrightarrow$}

\put(70,50){\tiny{$3a+2b+2e$}}
\put(66,33){\tiny{$6a+3b+ 6c+4d$}}
\put(85,33){\tiny{$4a+2b+3c+2d$}}
\put(95,50){\tiny{$2c+d$}}
\put(84, 58){\tiny{$6a+4b+3e$}} 

\put(82,29){$\big\downarrow$}

\put(70,17){\tiny{$3a+2b+2e$}}
\put(68,2){\tiny{$6a+3b+ 6c$}}
\put(71,0){\tiny{$+4d$}}
\put(84,1){\tiny{$4a+2b+3c+2d$}}
\put(88,11){\tiny{$12a+8b+6c$}}
\put(90,9){\tiny{$+3d+6e$}}
\put(83, 25){\tiny{$6a+4b+4c+2d+3e$}} 

\put(64,12){$\longleftarrow$}

\put(33,17){\tiny{$3a+2b+2e$}}
\put(33,0){\tiny{$6a+3b+ 6c+4d$}}
\put(52,0){\tiny{$4a+2b+3c+2d$}}
\put(54,19){\tiny{$12a+8b+6c$}}
\put(56,17){\tiny{$+3d+6e$}}
\put(32, 23){\tiny{$6a+4b+4c+2d+3e$}} 
\end{picture} 
\caption{The train track $\phi(\tau)$ is carried by $\tau$.}
\label{Fig:pA1} 
\end{figure}

\section{The pseudo-Anosov Map $\phi$} \label{Sec:pA}

In this section, we introduce the pseudo-Anosov map $\phi$ which will be used 
in the proof of Theorem~\ref{Thm:Main-Theorem}. Define
\[
\phi = D_{\alpha_5} D_{\alpha_4} D_{\alpha_3} D_{\alpha_2}D_{\alpha_1}.
\]
We check that $\phi$ is, in fact, a pseudo-Anosov.

\begin{theorem}
The map $\phi$ is pseudo-Anosov.
\end{theorem}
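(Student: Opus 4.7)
The plan is to exhibit an invariant train track for $\phi$ and then read off a Perron--Frobenius transition matrix on its branch weights, from which pseudo-Anosovness follows by a standard criterion (e.g.\ Bestvina--Handel, or Penner--Harer's general principle that a train track carrying its image under a homeomorphism via a Perron--Frobenius incidence matrix certifies the map as pseudo-Anosov).

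First, I would introduce a concrete train track $\tau$ on the five-times punctured sphere whose branches are adapted to the curves $\alpha_1,\dots,\alpha_5$. The natural choice, and the one indicated by Figure~\ref{Fig:pA1}, has five real branches labelled $a,b,c,d,e$ arranged so that each $\alpha_i$ runs along a specific union of branches; in particular each Dehn twist $D_{\alpha_i}$ can be realized as a splitting/folding operation on $\tau$ that is easy to track. Then I would apply the Dehn twists in the order $D_{\alpha_1}, D_{\alpha_2}, D_{\alpha_3}, D_{\alpha_4}, D_{\alpha_5}$, at each stage checking that the image train track is still carried by $\tau$ and updating the weight labels. This is exactly the bookkeeping displayed across the eleven panels of Figure~\ref{Fig:pA1}: the weight $a$ becomes $3a+2b$ after $D_{\alpha_1}$, etc., and at the end of the composition every branch has a non-negative integer linear combination of $a,b,c,d,e$ as its new weight.

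From the final panel I would read off the $5\times 5$ transition matrix $M$ whose $(i,j)$ entry records the number of times the image of the $j$-th branch traverses the $i$-th branch. The next step is to verify the two properties needed to conclude pseudo-Anosovness: (i) $M$ is primitive, i.e.\ some power of $M$ has all strictly positive entries, which one can check directly from the explicit entries visible in the last panel (every branch image already mixes several of the generators $a,b,c,d,e$, so a small power suffices); and (ii) the Perron eigenvalue $\lambda$ satisfies $\lambda>1$, which is immediate from the fact that the diagonal and off-diagonal entries grow the weights (for instance, the $a$-branch maps to something containing at least $12a$, forcing $\lambda\geq 2$). One also needs to check that the train track is \emph{recurrent} and \emph{transversely recurrent}, which is built into the construction and follows because $\tau$ fills $S$ and the carrying map is surjective on weights.

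The main obstacle is the verification of the carrying at each intermediate step: one must make sure that after each Dehn twist the resulting image is actually \emph{smoothly} carried by $\tau$ with the asserted weights, rather than merely homotopic to such a carried curve system. In practice this is a careful picture-chase that the figure already performs, and the only thing that can go wrong is a miscount at a switch. I would double-check this by confirming the switch conditions at each vertex of $\tau$ hold with the final weight assignment — these are linear relations among $a,b,c,d,e$ that must be preserved by $M$, equivalently that $M$ maps the cone of admissible weights into itself. Once those checks pass, primitivity and $\lambda>1$ give a pair of transverse measured foliations fixed by $\phi$ with dilatation $\lambda$, and $\phi$ is pseudo-Anosov.
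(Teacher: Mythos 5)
Your approach matches the paper's at the level of strategy (exhibit an invariant train track, compute the incidence matrix, invoke Perron--Frobenius), but it stops one step short of a complete argument, and the step you omit is exactly the one the paper spends the second half of its proof on. Having $\phi(\tau)$ carried by $\tau$ with a primitive transition matrix and $\lambda>1$ certifies the existence of a projectively unique measured lamination $F$ carried by $\tau$ and fixed by $\phi$, but it does not by itself certify that $F$ is \emph{filling}, and without that you cannot conclude that $\phi$ is pseudo-Anosov: a reducible map can carry a train track to itself with a primitive incidence matrix. The ``general principle'' you attribute to Penner--Harer (Perron--Frobenius incidence matrix on a carrying implies pseudo-Anosov) is not a theorem in that generality; the actual criteria in Casson--Bleiler, Penner--Harer, or Bestvina--Handel all carry an extra hypothesis --- that $\tau$ be maximal/complete, or that the induced train track map be efficient with no invariant proper subtrack, or that the invariant lamination be verified to be arational/filling by hand.

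The paper closes this gap directly: if $F$ were not filling it would be disjoint from some essential arc $\omega$; using the cusp structure of the complementary regions of $\tau$, one modifies $\omega$ outside a neighborhood of $\tau$ to obtain an arc $\omega'$ still disjoint from $F$ but now carried by $\tau$, contradicting uniqueness of the $\phi$-invariant lamination carried by $\tau$. Your proposal needs something equivalent to this. Two smaller issues: your claim that recurrence and transverse recurrence ``follow because $\tau$ fills $S$ and the carrying map is surjective on weights'' is not correct as stated --- birecurrence is not a consequence of $\tau$ being filling, and ``surjective on weights'' is not a meaningful description of the carrying; recurrence here does follow from the strictly positive Perron eigenvector, but transverse recurrence is a separate check. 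And a single train track with its incidence matrix yields only one of the two invariant foliations; asserting that primitivity ``gives a pair of transverse measured foliations'' silently uses either the dual bigon track or the Thurston classification, which should be made explicit.
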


\begin{proof}
In order to prove that $\phi$ is a pseudo-Anosov map, we find a train track $\tau$
on $S$ so that $\phi(\tau)$ is carried by $\tau$ and show that the matrix representation 
of $\phi$ in the coordinates given by $\tau$ is a Perron-Frobenius matrix
(see \cite{PH} for basic information about train-tracks). 

The series of images in Fig.~\ref{Fig:pA1} depict the train track $\tau$ and
its images under successive applications of Dehn twists associated to $\phi$. 
We see that $\phi(\tau)$ is indeed carried by $\tau$ and, keeping track of weights on 
$\tau$, we calculate that the induced action on the space of weights on $\tau$ is
given by the following matrix. 

\[ A= \left( \begin{array}{ccccc}
3 & 2 & 0 & 0 & 2 \\
6 & 3 & 6 & 4 & 0 \\
4 & 2 & 3 & 2 & 0 \\
12 & 8 & 6 & 3 & 6 \\
6 & 4 & 4 & 2 & 3  \end{array} \right)\]

Note that the space of admissible weights on $\tau$ is the subset of $\RR^5$ given
by positive real numbers $a, b, c, d$ and $e$ such that $a+b+e = c+d$. The linear 
map described above preserves this subset. The square of the matrix $A$ is strictly 
positive, which implies that the matrix is a Perron-Frobenius matrix. 
In fact, the top eigenvalue is
\[
\lambda = \sqrt{13}+ 2\, \sqrt{2  \sqrt{13} + 7} + 4
\]
that is associated to a unique irrational measured lamination $F$ carried by $\tau$
that is fixed by $\phi$.  We now argue that $F$ is filling. Note that, curves on $S$ are in 
one-to-one association with simple arcs connecting one puncture to another. We say an 
arc is carried by $\tau$ if the associated curve is carried by $\tau$. If $F$ is not filling, 
it is disjoint from some arc $\omega$ connecting two of the punctures. Modifying 
$\omega$ outside of a small neighborhood of $\tau$, we can produce an arc that is 
carried by $\tau$. In fact, for any two cusps of the train-track $\tau$, either an arc going 
clock-wise or counter-clockwise connecting these two cusps can be pushed into $\tau$. 
Hence, we can replace the portion of $\omega$ that is outside of a small neighborhood 
of $\tau$ with such an arc to obtain an arc $\omega'$ that is still disjoint from $F$
but is also carried by $\tau$. Hence, if $F$ is not filling, it is disjoint from some arc
(and thus some curve) carried by $\tau$. But $F$ is the unique lamination carried by
$\tau$ that is fixed under $\phi$ which is a contradiction. 
This implies that $\phi$ is pseudo-Anosov. 
\end{proof}

\section{Shadow to Curve Complex not a Quasi-Geodesic}\label{Sec:Shadows}
The curve graph $\CS$ is a graph whose vertices are curves on $S$ and
whose edges are pairs of disjoint curves. We assume every edge has length one
turning $\CS$ into a metric space. This means that, for a pair of curves $\alpha$ and $\beta$, $d_\CS(\alpha, \beta) = n$ if 
\[
\alpha = \gamma_0, \ldots, \gamma_n = \beta
\] 
is the shortest sequence of curves on $S$ such that the successive $\gamma_i$ are 
disjoint. Masur-Minsky showed that $\CS$ is an infinite diameter Gromov hyperbolic
space \cite{MM1}. 

We also talk about the distance between subsets of $\CS$ using the
same notation. That is, for two sets of curves $\mu_0, \mu_1 \subset \CS$ we
define 
\begin{equation*}
d_\CS(\mu_0,\mu_1) = \max_{\gamma_0 \in \mu_0, \gamma_1 \in \mu_1} d_\CS(\gamma_0,\gamma_1).
\end{equation*}

\begin{definition} \label{Def:Shadow}
The \emph{shadow map} from the mapping class group to the curve complex is the map
defined as:
\begin{align*}
\Upsilon \from \Map(S) &\to \CS\\
f &\to f(\alpha_1).
\end{align*}
\end{definition}

The shadow map from $\Map(S)$ equipped with $d_\Sn$ to the curve complex
is $4$-Lipschitz:

\begin{lemma}\label{lemma: cc to map}
For any $f \in \Map(S)$, we have
\begin{equation}
d_\CS (\alpha_1, f \alpha_1) \leq 4 \norm{f}_\Sn. 
\end{equation}
In particular, the Lipschitz constant of the shadow map is independent of $n$. 
\end{lemma}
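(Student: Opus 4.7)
The plan is to write $f = g_1 g_2 \cdots g_k$ as a geodesic word in the generators $g_i \in \Sn \cup \Sn^{-1}$, with $k = \norm{f}_\Sn$. Since $\Map(S)$ acts on $\CS$ by isometries, the triangle inequality yields
\[
d_\CS(\alpha_1, f\alpha_1) \leq \sum_{i=1}^k d_\CS\big(g_1 \cdots g_{i-1}\alpha_1,\; g_1 \cdots g_i\alpha_1\big) = \sum_{i=1}^k d_\CS(\alpha_1, g_i\alpha_1),
\]
so it is enough to show that $d_\CS(\alpha_1, g\alpha_1) \leq 4$ for every $g \in \Sn \cup \Sn^{-1}$.

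For a generator of the form $g = D_{\alpha_j}^{\pm 1}$, I would insert $\alpha_j$ as an intermediate vertex; since $D_{\alpha_j}$ fixes $\alpha_j$, this gives $d_\CS(\alpha_1, D_{\alpha_j}^{\pm 1}\alpha_1) \leq 2\, d_\CS(\alpha_1, \alpha_j)$. Direct inspection of Figure~\ref{Fig:Surface} shows that each $\alpha_j$ is at $\CS$-distance at most $2$ from $\alpha_1$ (any two of the five depicted curves are either disjoint or share a third curve in their common complement), so the bound of $4$ follows.

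For a generator $g = s_{i,j}^{\pm 1} = (D_{\alpha_i}^{n}D_{\alpha_j}^{-1})^{\pm 1}$ with $|i-j| \equiv 1 \pmod 5$, the main obstacle---and the whole point of the statement---is making the bound independent of $n$. The key observation, read off from Figure~\ref{Fig:Surface}, is that the curves are arranged so that for every such pair $(i,j)$ at least one of $i$ or $j$ lies in the set $I = \{k : D_{\alpha_k}\alpha_1 = \alpha_1\}$, i.e.\ the indices $k$ for which $\alpha_k$ is either equal to or disjoint from $\alpha_1$. Using equivariance of the $\Map(S)$-action on $\CS$, the high-power Dehn twist in $s_{i,j}$ can then be absorbed without changing the distance from $\alpha_1$: if $i \in I$, then $D_{\alpha_i}^{-n}$ fixes $\alpha_1$ and
\[
d_\CS\!\big(\alpha_1,\; D_{\alpha_i}^{n}D_{\alpha_j}^{-1}\alpha_1\big) = d_\CS\!\big(D_{\alpha_i}^{-n}\alpha_1,\; D_{\alpha_j}^{-1}\alpha_1\big) = d_\CS\!\big(\alpha_1,\; D_{\alpha_j}^{-1}\alpha_1\big) \leq 4
\]
by the previous case, while the case $j \in I$ is symmetric: then $D_{\alpha_j}^{-1}\alpha_1 = \alpha_1$, so $s_{i,j}\alpha_1 = D_{\alpha_i}^{n}\alpha_1$ is within distance $2\, d_\CS(\alpha_1, \alpha_i) \leq 4$ of $\alpha_1$. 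Combining all cases yields the stated $4$-Lipschitz bound, with the Lipschitz constant visibly independent of $n$.
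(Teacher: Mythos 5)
Your setup via the triangle inequality and the treatment of the generators $D_{\alpha_j}^{\pm 1}$ are fine and match the paper's. The problem is in the treatment of the $s_{i,j}$ generators, and it is a genuine gap. You assert, as the ``key observation read off from Figure~\ref{Fig:Surface},'' that for every pair $(i,j)$ with $|i-j|\equiv 1\pmod 5$, at least one of $i,j$ lies in $I=\{k : D_{\alpha_k}\alpha_1=\alpha_1\}$. But the paper's own proof of this lemma tells you the opposite about the arrangement: it uses the identity $s_{i,i+1}^{-1}\alpha_i=\alpha_i$, i.e.\ $D_{\alpha_{i+1}}(\alpha_i)=\alpha_i$, which forces $\alpha_i$ and $\alpha_{i+1}$ to be \emph{disjoint} for $|i-j|\equiv 1\pmod 5$. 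Since five curves on $S_{0,5}$ cannot be pairwise disjoint, $\alpha_i$ must then intersect $\alpha_{i\pm 2}$. With that configuration, $\alpha_1$ is disjoint from $\alpha_2,\alpha_5$ but intersects $\alpha_3,\alpha_4$, so $I=\{1,2,5\}$, and the pair $(3,4)$ (which satisfies $|3-4|\equiv 1\pmod 5$) has \emph{neither} index in $I$. Your argument therefore has no case that covers $s_{3,4}^{\pm 1}$, and the ``key observation'' is simply false.

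The observation you want is different and is the one the paper actually uses: not that one of $\alpha_i,\alpha_j$ is disjoint from $\alpha_1$, but that $\alpha_i$ and $\alpha_j$ are disjoint from \emph{each other}. This makes $s_{i,j}$ (and $s_{i,j}^{-1}$) fix the curve $\alpha_i$, so one can insert $\alpha_i$ as the intermediate vertex:
\[
d_\CS(\alpha_1,s_{i,j}\alpha_1)\le d_\CS(\alpha_1,\alpha_i)+d_\CS(\alpha_i,s_{i,j}\alpha_1)=d_\CS(\alpha_1,\alpha_i)+d_\CS(s_{i,j}^{-1}\alpha_i,\alpha_1)=2\,d_\CS(\alpha_1,\alpha_i)\le 4,
\]
and this is what kills the dependence on $n$. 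Your ``absorb the high power because it fixes $\alpha_1$'' trick works only when $i\in I$ or $j\in I$, which is not always the case; the correct pivot is a curve fixed by the \emph{whole} generator $s_{i,j}$, and disjointness of $\alpha_i$ from $\alpha_j$ supplies that for every allowed pair $(i,j)$.
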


\begin{proof}
It is sufficient to prove the lemma for elements of $\Sn$. Consider $D_{\alpha_i} \in \Sn$. 
If $i(\alpha_i, \alpha_1)=0$ then 
\[
d_\CS(\alpha_1 , D_{\alpha_i} ( \alpha_1) ) = d_\CS(\alpha_1, \alpha_1)= 0.
\] 
If $i(\alpha_i, \alpha_1)=2$, then there is a curve $\alpha_j$ that disjoint from both 
$\alpha_1$ and $\alpha_i$ and hence $\alpha_j$ is also disjoint $D_{\alpha_i} (\alpha_1)$. Therefore, 
$d_\CS(\alpha_1, D_{\alpha_i} (\alpha_1))=2$. 

Now consider the element $s_{i,i+1} \in \Sn$. Note that 
$s_{i, i+1}^{-1}\alpha_i = \alpha_i$. Hence, 
\begin{align*}
d_\CS(\alpha_1,s_{i,i+1} \alpha_1) 
  &\leq d_\CS(\alpha_1, \alpha_i) + d_\CS(\alpha_i, s_{i,i+1} \alpha_1)\\
  &\leq 2 + d_\CS(s_{i, i+1}^{-1} \alpha_i , \alpha_1)\\     
&\leq 2 + d_\CS(\alpha_i, \alpha_1) \leq 2+2=4
\end{align*}
Thus, we have proven our claim.
\end{proof}

Using this lemma and the theorems from Section 3, we show that the shadow of geodesics 
from the mapping class group to the curve complex are not always quasi-geodesics.

\begin{theorem}
For all $K \geq 1$, $C \geq 0$, there exists a geodesic in the mapping class group, 
whose shadow to the curve complex is not a $(K,C)$--quasi-geodesic.
\end{theorem}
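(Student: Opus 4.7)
The plan is to exhibit, for each $(K,C)$, a geodesic $\mathcal{G}$ in $\Map(S)$ whose shadow in $\CS$ both starts and ends at $\alpha_1$ yet visits a curve arbitrarily far from $\alpha_1$. Any re-parametrized $(K,C)$--quasi-geodesic $\eta$ with coinciding endpoints forces $K^{-1}L - C \leq 0$, i.e.\ $L \leq KC$, so its image has diameter at most $2(K^2 C + C)$; producing such a $\mathcal{G}$ therefore finishes the proof.

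First I would take $w_k = D_{\alpha_1}^{m_k}$, so that $\Upsilon(\id) = \Upsilon(w_k) = \alpha_1$ because $w_k$ fixes $\alpha_1$. The triangle inequality gives
\[
\|w_k\|_\Sn \leq \|w_k\phi^{-(k+1)/5}\|_\Sn + \|\phi^{(k+1)/5}\|_\Sn,
\]
where Proposition~\ref{thm:geodesics} evaluates the first term and the second is $k+1$. For the matching lower bound I would compute
\[
h(w_k) = m_k = (k+1) + (n-1)\Big(k + \sum_{j=1}^{k-1}(k-j)n^j\Big),
\]
and, choosing $n$ so that $k+1 < (n-1)/2$, apply Lemma~\ref{cor1}. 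Equality in the triangle inequality then implies that the concatenation
\[
\mathcal{G} := \big(s_{1,2}^{m_{k-1}}s_{2,3}^{m_{k-2}}\cdots s_{k,k+1}\big)\cdot \phi^{(k+1)/5}
\]
drawn from Proposition~\ref{thm:geodesics} is a genuine geodesic from $\id$ to $w_k$, passing through the intermediate point $p := w_k\phi^{-(k+1)/5}$.

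Next, since $\Map(S)$ acts isometrically on $\CS$ and $w_k$ fixes $\alpha_1$,
\[
d_\CS\big(\alpha_1,\Upsilon(p)\big) = d_\CS\big(\alpha_1,\phi^{-(k+1)/5}(\alpha_1)\big).
\]
Because $\phi$ is pseudo-Anosov (Section~\ref{Sec:pA}), by Masur--Minsky~\cite{MM1} it acts on $\CS$ with positive translation length, so the right-hand side tends to infinity with $k$. Given $(K,C)$, I would then pick $k$ large enough (and $n > 2k+3$) that this distance exceeds $2(K^2C+C)$. The shadow of $\mathcal{G}$ is then a loop based at $\alpha_1$ whose image contains a point too far away to fit inside any re-parametrized $(K,C)$--quasi-geodesic with coinciding endpoints.

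The hard part will be verifying the Lemma~\ref{cor1} computation for $w_k$ so that the claimed lower bound $\|w_k\|_\Sn \geq (2k+1) + \sum_{j=1}^{k-1}(k-j)n^j$ matches the upper bound, which is exactly what certifies that the concatenation above is an honest geodesic rather than merely a short word. Once that equality is in hand, the failure of the quasi-geodesic property is immediate from the fact that pseudo-Anosovs displace points on $\CS$ without bound.
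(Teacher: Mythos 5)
Your argument is essentially the paper's argument with a different terminal element: you use $w_k = D_{\alpha_1}^{m_k}$ with the factorization $w_k = \big(w_k\phi^{-(k+1)/5}\big)\cdot\phi^{(k+1)/5}$, whereas the paper uses $D_{\alpha_1}^{n^k}$ with the factorization $D_{\alpha_1}^{n^k} = \big(w_{k-1}\phi^{-k/5}\big)\big(\phi^{k/5}u_k\big)$, using \emph{both} word-length formulas of Proposition~\ref{thm:geodesics}. The conceptual core (a power of $D_{\alpha_1}$, whose shadow starts and ends at $\alpha_1$, with a geodesic forced by Lemma~\ref{cor1} through an intermediate point whose shadow is $\phi^{\pm(k+1)/5}\alpha_1$, which is far in $\CS$ by Masur--Minsky translation length) is the same. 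Your arithmetic is correct: indeed $m_k = (k+1) + (n-1)\big(n^{k-1}+2n^{k-2}+\cdots+(k-1)n+k\big)$, and applying Lemma~\ref{cor1} with $r = k+1$ gives the lower bound matching $\Norm{w_k\phi^{-(k+1)/5}}_\Sn + \Norm{\phi^{(k+1)/5}}_\Sn = \big(n^{k-1}+\cdots+k\big) + (k+1)$, so the concatenation is a genuine geodesic.

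The one genuine gap is the quantifier on $n$. Your application of Lemma~\ref{cor1} to $w_k$ requires the remainder $r = k+1$ to satisfy $k+1 < (n-1)/2$, which forces $n > 2k+3$; since $k$ must grow with $K$ and $C$ (to make $d_\CS(\alpha_1,\phi^{\pm(k+1)/5}\alpha_1) > 2(K^2C+C)$), you end up choosing $n$ \emph{after} $K$ and $C$. But Theorem~\ref{thm:shadow} in the introduction asserts a single $n$ that works for all $K,C$, and Section~\ref{Sec:Shadows} is its proof, so the order of quantifiers matters. The paper's choice of terminal element $D_{\alpha_1}^{n^k}$ sidesteps this entirely: $h\big(D_{\alpha_1}^{n^k}\big) = n^k = (n-1)\big(n^{k-1}+\cdots+1\big) + 1$ always has remainder $r = 1$, so Lemma~\ref{cor1} applies uniformly in $k$ once $n\geq 4$. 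Your proof as written establishes only the weaker statement where the generating set is allowed to depend on $(K,C)$. The fix is small --- replace $w_k$ by $D_{\alpha_1}^{n^k}$ (or any power with remainder bounded independently of $k$) and apply both halves of Proposition~\ref{thm:geodesics} as the paper does --- but it is a fix you need to make.
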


\begin{proof}
Recall that, for a positive integer $k$, we have 
\[
m_k = n^{k-1} + n^{k-2} + \ldots + n + 1, 
\qquad
\ell_k = n^k - n^{k-1} - n^{k-2} - \ldots - n -1,
\]
$w_k = D_{\alpha_1}^{m_k}$ and $u_k = D_{\alpha_1}^{\ell_k}$. Note
that $m_{k-1} + \ell_k = n^k$. Hence, we can write 
\[
D_{\alpha_1}^{n^k} = \Big(w_{k-1} \phi^{-k/5}\Big) \Big( \phi^{k/5} u_{k} \Big).
\]
Also,
\[
h\Big(D_{\alpha_1}^{n^k}\Big) = n^k=(n-1)\big(n^{k-1} + n^{k-2} + \ldots + n + 1\big) +1.
\]
Therefore by Lemma \ref{cor1}
\begin{equation}\label{shadow eqn1}
\norm{D_{\alpha_1}^{n^k}}_\Sn \geq n^{k-1} + n^{k-2} + \ldots + n + 2.
\end{equation}
But, from Theorem \ref{thm:geodesics} we have
\begin{equation*}
\norm{w_{k-1} \phi^{-k/5}}_\Sn = n^{k-2} + 2n^{k-3} + \ldots + (k-2)n + (k-1).
\end{equation*}
and 
\begin{equation*}
\norm{u_k \phi^{k/5}}_\Sn = n^{k-1} - n^{k-3} - 2n^{k-4} - \ldots - (k-3)n - (k-2) + 1.
\end{equation*}
The sum of the word lengths of the two elements is
\begin{equation*}
n^{k-1} + n^{k-2} + \ldots + n + 2
\end{equation*}
which is equal to the lower bound found in Equation \ref{shadow eqn1}. 
Thus
\[
\norm{D_{\alpha_1}^{n^k}}_\Sn = 
\norm{w_{k-1} \phi^{-k/5}}_\Sn + \norm{\phi^{k/5} u_{k}}_\Sn
\]
which means there is a geodesic connecting $D_{\alpha_1}^{n^k}$ to the identity
that passes through $\phi^{k/5}u_k$. 

Since $\phi$ is a pseudo-Anosov map, there is a lower-bound on its translation 
distance along the curve graph (see Theorem 4.6 from \cite{MM1}). Namely, there 
is a constant $\sigma>0$ so that, for every $m$, 
\begin{equation} \label{Eq:MTD}
d_\CS\big(\alpha_1,\phi^m \alpha_1\big) \geq \sigma \, m.
\end{equation}
Also, $u_k \alpha_1 = \alpha_1$ which implies 
\begin{equation*}
d_\CS\big(\alpha_1, \phi^{k/5}u_k \alpha_1\big)
= d_\CS\big(\alpha_1, \phi^{k/5} \alpha_1\big) \geq \sigma \, \frac{k}{5}.
\end{equation*}
That is,
\[
\Upsilon (\id) = \Upsilon(D_{\alpha_1}^{n^k})= \alpha_1.
\]
However, $\Upsilon\big(\phi^{k/5}u_k\big)$ is at least distance $\frac{\sigma k}5$ away from 
$\alpha_1$. Therefore, choosing $k$ large compared with $\sigma$, $K$ and $C$, we see that 
the shadow of this geodesic (the one connecting $\id$ to $D_{\alpha_1}^{n^k}$ which passes
through $\phi^{k/5}u_k$) to $\CS$ is not a $(K,C)$--quasi-geodesic. 
\end{proof}


\section{Axis of a pseudo-Anosov in the mapping class group}\label{Sec:pA_axis}

Consider the path 
\[
\Aphi \from \ZZ \to \Map(S), \qquad i \to \phi^{i}.
\]
Since $\norm{\phi}_\Sn \leq 5$, then $\norm{\phi^i}_\Sn \leq 5i$. 
Also, using Lemma \ref{lemma: cc to map} and Equation~\eqref{Eq:MTD} we get
\[
\norm{\phi^i}_\Sn 
\geq \frac 14 d_{C(S)}\big(\alpha_1, \phi^i \alpha_1 \big)
\geq  \frac{i \, \sigma}4. 
\]
Therefore,
\[
\frac{i \sigma}{4} \leq \norm{\phi^i}_\Sn \leq 5i.
\]
This proves the following lemma.
\begin{lemma}\label{Lem:quasi-geodesic} 
The path $\Aphi$ is a quasi-geodesic in $(\Map(S), d_\Sn)$ for every $n$ with
uniform constants. 
\end{lemma}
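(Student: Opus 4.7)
The plan is to observe that the two inequalities $\frac{i\sigma}{4} \leq \norm{\phi^i}_\Sn \leq 5i$ displayed immediately above the lemma statement essentially already contain the proof, and to package them into the quasi-geodesic definition while emphasizing that the constants are uniform in $n$.

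First I would reduce the lemma to a statement about word lengths. Because $d_\Sn$ is left-invariant under $\Map(S)$, we have $d_\Sn(\phi^i, \phi^j) = \norm{\phi^{j-i}}_\Sn$, so it suffices to exhibit constants $K, C$, independent of $n$, with $\tfrac{1}{K}|m| - C \leq \norm{\phi^m}_\Sn \leq K|m| + C$ for every $m \in \ZZ$. The upper bound $\norm{\phi^m}_\Sn \leq 5|m|$ holds because $\phi = D_{\alpha_5}D_{\alpha_4}D_{\alpha_3}D_{\alpha_2}D_{\alpha_1}$ is a product of five Dehn twists, each of which lies in $\Sn$ for every $n$; the same is true of $\phi^{-1}$, so $\phi^m$ is a product of at most $5|m|$ generators for either sign of $m$.

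For the lower bound I would use the shadow map: \lemref{lemma: cc to map} shows that the shadow map is $4$-Lipschitz with constant independent of $n$, while \eqref{Eq:MTD} records the general fact that a pseudo-Anosov translates the curve graph by a positive minimal amount $\sigma > 0$. Chaining these two gives $\norm{\phi^m}_\Sn \geq \tfrac14 d_\CS(\alpha_1, \phi^m \alpha_1) \geq \tfrac{\sigma}{4}|m|$. Setting $K = \max(5, 4/\sigma)$ and $C = 0$ then verifies the quasi-geodesic definition.

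There is no serious obstacle; the real content of the lemma is the uniformity claim. This uniformity is built in from two observations: the five Dehn twists $D_{\alpha_i}$ belong to $\Sn$ for every $n$, and neither the Lipschitz constant $4$ of the shadow map nor the curve-graph translation length $\sigma$ of $\phi$ depends on the chosen generating set of $\Map(S)$. Hence the same $K, C$ work simultaneously for all $n$, which is the desired conclusion.
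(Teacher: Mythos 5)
Your proof is correct and follows the paper's argument essentially verbatim: the paper also derives the two-sided bound $\frac{i\sigma}{4} \leq \norm{\phi^i}_\Sn \leq 5i$ from $\norm{\phi}_\Sn \leq 5$, the $4$-Lipschitz shadow map of \lemref{lemma: cc to map}, and the translation-length bound \eqref{Eq:MTD}. Your added remarks on left-invariance and on why the constants are independent of $n$ simply make explicit what the paper leaves implicit.
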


We abuse notation and allow $\Aphi$ to denote both the map, and the image of the map 
in $\Map(S)$. For $i,j\in \ZZ$, let $\calg = \calg_{i,j}$
be a geodesic in  $(\Map(S), d_\Sn)$ connecting $\phi^i$ to $\phi^{\, j}$. 
Let $\calG= \Upsilon \circ \calg$ be the shadow of $\calg$ to the curve complex and let 
\[\Proj_\calG \from \Map(S) \to \calG\]
be the composition of $\Upsilon$ and the closest point projection from $\CS$ to 
$\calG$. The following theorem, proven in more generality by Duchin and Rafi 
\cite[Theorem 4.2]{DR1}, is stated for geodesics $\calg_{i,j}$ and the path $\calG$.

\begin{theorem}\label{Thm:DR}
The path $\calG$ is a quasi-geodesic in $\CS$. Furthermore, 
there exists a constant $B_n$ which depends on $n$ and $\phi$, and a constant $B$ 
depending only on $\phi$ such that the following holds. For $x \in \Map(S)$ with 
$d_\Sn(x, \calg) > B_n$, let $r=d_\Sn(x, \calg) / B_n$
and let $B(x,r)$ be the ball of radius $r$ centered at $x$ in $(\Map(S), d_\Sn)$. Then 
\begin{equation*}
\diam_\CS\!\Big(\Proj_\calG\! \big(B(x,r)\big)\Big) \leq B.
\end{equation*}
\end{theorem}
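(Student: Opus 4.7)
The plan is to reduce both assertions to the Gromov hyperbolicity of $\CS$, using as inputs the $4$-Lipschitz shadow map from \lemref{lemma: cc to map} and the positive $\CS$-translation length of $\phi$ from \eqnref{Eq:MTD}. The key idea is that, even though $(\Map(S),d_\Sn)$ is not hyperbolic, the projection estimate is carried out entirely in the hyperbolic image $\CS$, so one can inherit a contracting projection there and pull it back via $\Upsilon$.

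For the quasi-geodesicity of $\calG$, I would argue that $\calG$ lies at bounded Hausdorff distance in $\CS$ from the quasi-geodesic $\Upsilon(\Aphi)$, with constants depending only on $\phi$. The cleanest route is through the Masur--Minsky distance formula \cite{MM2}: for any $x=\calg(s)$ on the geodesic, the additivity $d_\Sn(\phi^i,x)+d_\Sn(x,\phi^{\, j})=d_\Sn(\phi^i,\phi^{\, j})$ combined with Behrstock's inequality --- which forces every proper-subsurface projection $d_Y(\phi^i,\phi^{\, j})$ to stay uniformly bounded since $\phi$ is pseudo-Anosov --- forces $x$ to have bounded $d_Y$ to $\Aphi$ for every proper $Y$. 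Feeding this back into the distance formula shows $x$ is within bounded $d_\Sn$ of $\Aphi$, so by the $4$-Lipschitz map $\Upsilon$ the shadow $\Upsilon(x)$ lies within bounded $\CS$-distance of $\Upsilon(\Aphi)$. Since $\Upsilon(\Aphi)$ is a quasi-geodesic with uniform constants (by \eqnref{Eq:MTD} and $\norm{\phi}_\Sn\leq 5$), so is $\calG$.

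For the projection property, I would then invoke the strong contracting property of quasi-geodesics in the $\delta$-hyperbolic space $\CS$: there exists a constant $B$, depending only on $\delta$ and the quasi-geodesic constants of $\calG$ (both uniform in $n$), such that whenever $R\leq d_\CS(p,\calG)$ the ball $B_\CS(p,R)$ projects onto $\calG$ in a set of diameter at most $B$. The Lipschitz shadow bound gives $\Upsilon(B(x,r))\subseteq B_\CS(\Upsilon(x),4r)$, so it suffices to arrange
\[
4r \;\leq\; d_\CS\!\bigl(\Upsilon(x),\calG\bigr),
\qquad\text{equivalently}\qquad
d_\Sn(x,\calg) \;\leq\; \tfrac{B_n}{4}\, d_\CS\!\bigl(\Upsilon(x),\calG\bigr).
\]

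The main obstacle --- and the reason $B_n$ must depend on $n$ --- is producing this last inequality, since $\Upsilon$ is not coarsely bi-Lipschitz: a large Dehn twist along a curve disjoint from $\alpha_1$ has arbitrarily large $\Sn$-word length while fixing the shadow. The tool is once more the distance formula, which expresses $d_\Sn(x,\calg)$ as a sum of thresholded subsurface projections $d_Y(x,\calg)$, with multiplicative and additive constants that depend on $\Sn$ and hence on $n$. Because $\calg$ has uniformly bounded projections to every proper subsurface by the argument of the previous paragraph, Behrstock's inequality pins every proper-$Y$ contribution, so the dominant term in the sum must be the $Y=S$ contribution, namely $d_\CS(\Upsilon(x),\calG)$ up to constants. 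Choosing $B_n$ larger than these distance-formula constants yields the required inequality, and combining with the hyperbolic contraction estimate completes the proof.
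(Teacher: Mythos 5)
The paper does not reprove this statement; it simply cites Duchin--Rafi \cite[Theorem~4.2]{DR1} and then discusses how their constants ($B_1$, $B_2$ in DR1's notation) translate into $B_n\asymp n$ and a uniform $B$. So your proposal is attempting a fresh proof of a result the paper treats as a black box. Your first paragraph (quasi-geodesicity of $\calG$ via fellow-travelling of $\calg$ with $\Aphi$, the distance formula, and the $4$-Lipschitz shadow map) is in the right spirit and can be made to work.

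The second half, however, has a genuine gap. You reduce the contracting estimate to showing $4r \le d_\CS\!\bigl(\Upsilon(x),\calG\bigr)$, and claim the distance formula forces the $Y=S$ term to dominate $d_\Sn(x,\calg)$, so that $d_\Sn(x,\calg)\lesssim d_\CS(\Upsilon(x),\calG)$. This is false. The distance formula sums $[d_Y(x,y)]_K$ over \emph{all} subsurfaces $Y$ (with $y$ a nearest point of $\calg$), and a proper-subsurface term can be the dominant one: take, e.g., $x=\phi^{k/2}D_{\alpha_1}^M$ with $\calg$ a geodesic from $\id$ to $\phi^k$. Then $\Upsilon(x)=\phi^{k/2}(\alpha_1)$ lies within bounded $\CS$-distance of $\calG$, while $d_\Sn(x,\calg)\approx M/(n-1)$ is arbitrarily large; the $d_\Sn$-distance is carried almost entirely by the annular projection to $\phi^{k/2}(\alpha_1)$, not by $d_\CS$. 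Behrstock's inequality constrains the projections of the \emph{endpoints} of $\calg$ (and hence of $\calg$ itself), but it does not bound $d_Y(x,y)$ for arbitrary $x$. If your claimed comparability were true it would say the shadow map is a quasi-isometry from $\Map(S)$ to $\CS$, which it certainly is not.

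Consequently, when $\Upsilon(x)$ lands near $\calG$ your hyperbolic-contraction step has no ball disjoint from $\calG$ to work with, and the $4r$-ball $B_\CS(\Upsilon(x),4r)$ can straddle $\calG$. The theorem is still true in this regime, but the reason lives in the subsurface-projection data you threw away: a point $z\in B(x,r)$ has $\Sn$-word length at most $r$ from $x$, so $\Upsilon(z)$ differs from $\Upsilon(x)$ by a bounded amount of twisting in each annulus (again by the distance formula, with constants depending on $n$), while any curve whose nearest point on $\calG$ is far from $\Proj_\calG(\Upsilon(x))$ must have large annular distance from $\Upsilon(x)$ around the relevant curve (bounded geodesic image). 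This is exactly the mechanism in DR1's argument, and is why $B_n$ depends on the generating set while $B$ depends only on $\phi$ and the hyperbolicity constant of $\CS$. Your proposal needs to retain and use this subsurface data rather than passing entirely to $\CS$ at the start.
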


In the proof of \cite[Theorem 4.2]{DR1}, it can be seen that $B_n$ 
($B_1$ in their notation) is dependent on the generating set since $B_n$ is taken to 
be large with respect to the constants from the Masur and Minsky distance formula which 
depend on the generating set \cite{MM2}. Let $\calS$ be a fixed generating set
for $\Map(S)$. Then the word lengths of elements in $\Sn$ in terms of $\calS$ 
grow linearly in $n$ with respect to $\calS$. Hence, the constants involved in the 
Masur-Minsky distance formula also change linearly in $n$. That is, $B_n \asymp n$. 
Also, one can see that the constant $B$ ($B_2$ in their proof) depends only on 
$\phi$ and the hyperbolicity constant of the curve graph, but not the generating set.

Since, $\Aphi$ is a quasi-geodesic, \thmref{Thm:DR} and the usual Morse argument 
implies the following.

\begin{proposition} \label{Prop:delta_n}
The paths $\Aphi[i,j]$ and $\calg_{i,j}$ fellow travel each other and the constant
depends only on $n$. That is, there is a bounded constant $\delta_n$ depending on 
$n$ such that 
\[
\delta_n \geq \max \left(
 \max_{\substack{p \in \Aphi[i,j]}}  \min_{\substack{q \in \calg_{i,j}}}  d_\Sn (p,q), 
 \max_{\substack{p \in \calg_{i,j}}}   \min_{\substack{q \in \Aphi[i,j]}}  d_\Sn (p,q) \right).
\]
\end{proposition}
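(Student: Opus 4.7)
The plan is to run the standard Morse-type argument in $\Map(S)$, using as input Lemma~\ref{Lem:quasi-geodesic} (which makes $\Aphi$ a quasi-geodesic with constants independent of $n$) together with the contracting projection supplied by Theorem~\ref{Thm:DR}. Since $\Aphi[i,j]$ and $\calg_{i,j}$ share the endpoints $\phi^i$ and $\phi^{\,j}$, it suffices to bound each of the two one-sided Hausdorff distances separately.

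For the direction $\Aphi[i,j] \to \calg_{i,j}$, I would argue by contradiction. Suppose $p = \phi^k \in \Aphi[i,j]$ satisfies $D := d_\Sn(p, \calg_{i,j}) \gg B_n$. Then $B(p, D/B_n)$ is disjoint from $\calg_{i,j}$, so Theorem~\ref{Thm:DR} forces $\Proj_\calG(B(p, D/B_n))$ to have $\CS$-diameter at most $B$. On the other hand, since $\Aphi$ is a uniform quasi-geodesic, I can choose $m$ proportional to $D/B_n$ so that the two points $p^\pm := \phi^{k \pm m}$ lie inside $B(p, D/B_n)$. Because $\phi$ acts by isometries on $\CS$, Equation~\eqref{Eq:MTD} gives $d_\CS(\Upsilon(p^-), \Upsilon(p^+)) = d_\CS(\alpha_1, \phi^{2m}\alpha_1) \geq 2\sigma m$. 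Now $\Upsilon \circ \Aphi[i,j]$ and $\calG$ are both quasi-geodesics in the Gromov-hyperbolic space $\CS$ with matching endpoints and uniform constants, so the usual Morse lemma in $\CS$ makes them fellow travel at a distance depending only on $\phi$ and the hyperbolicity constant. Hence $\Upsilon(p^\pm)$ lie close to $\calG$, their closest-point projections to $\calG$ stay $\Omega(m)$ apart in $\CS$, and this must be $\leq B$. This forces $m$ to be bounded in terms of $\phi$ alone, so $D = O(B_n)$.

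For the reverse direction $\calg_{i,j} \to \Aphi[i,j]$, I would interpolate using the bound just obtained. Write $\delta_n^{(1)}$ for that bound. For each integer $k \in [i,j]$ choose $q_k \in \calg_{i,j}$ with $d_\Sn(\phi^k, q_k) \leq \delta_n^{(1)}$, arranged so that $q_i = \phi^i$ and $q_j = \phi^{\,j}$. Because $d_\Sn(\phi^k, \phi^{k+1}) \leq \norm{\phi}_\Sn \leq 5$, the triangle inequality yields $d_\Sn(q_k, q_{k+1}) \leq 5 + 2\delta_n^{(1)}$. Hence $\{q_k\}$ is a $(5 + 2\delta_n^{(1)})$-dense subset of the geodesic $\calg_{i,j}$, so every point of $\calg_{i,j}$ lies within $(5 + 2\delta_n^{(1)})/2 + \delta_n^{(1)}$ of some $\phi^k \in \Aphi[i,j]$. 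Taking $\delta_n$ to be the maximum of the two bounds completes the proof.

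The main obstacle is tracking the $n$-dependence of the constants. The contracting radius $B_n$ from Theorem~\ref{Thm:DR} grows with $n$, so the final $\delta_n$ will be linear in $B_n$; however, the quasi-geodesic constants for $\Aphi$ (Lemma~\ref{Lem:quasi-geodesic}), the translation constant $\sigma$ (Equation~\eqref{Eq:MTD}), the Lipschitz constant of $\Upsilon$ (Lemma~\ref{lemma: cc to map}), and the Morse/hyperbolicity constants in $\CS$ all contribute factors independent of $n$. Careful bookkeeping of these is what delivers a single constant $\delta_n$ depending only on $n$ (and on $\phi$).
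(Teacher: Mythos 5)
Your proposal is correct and is essentially a detailed unpacking of exactly what the paper invokes: the authors simply cite ``the usual Morse argument'' together with Theorem~\ref{Thm:DR} and Lemma~\ref{Lem:quasi-geodesic}, and your two-sided argument (contracting projection forces a bound on $d_\Sn(\Aphi, \calg_{i,j})$, then a discrete interpolation recovers the reverse inclusion) is the standard way to carry that out. Two small points worth tightening but not genuine gaps: (i) you assert that the fellow-traveling constant between $\Upsilon\circ\Aphi$ and $\calG$ in $\CS$ depends only on $\phi$ and the hyperbolicity constant, but the quasi-geodesic constants of $\calG$ supplied by Theorem~\ref{Thm:DR} may themselves depend on $n$; this does not hurt the proof since the proposition only requires $\delta_n$ to depend on $n$, but the claim as stated is not justified; (ii) the step asserting that $\{q_k\}$ is $(5+2\delta_n^{(1)})/2$-dense in $\calg_{i,j}$ implicitly uses that the parameters $t_k$ of the $q_k$ along the geodesic, which need not be monotone in $k$, still sweep out $[0,L]$ with gaps $\leq 5+2\delta_n^{(1)}$; a one-line discrete intermediate-value argument (the $t_k$ start at $0$, end at $L$, and consecutive jumps are $\leq 5+2\delta_n^{(1)}$) closes this.
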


We now show that $\phi$ acts loxodromically in $(\Map(S), d_\Sn)$. That 
is, there exists a geodesic $\aphi$ in  $(\Map(S), d_\Sn)$ that is preserved by a
power of $\phi$. This is folklore theorem, but we were unable to find a reference for it 
in the literature. The proof given here follows the arguments in  \cite[Theorem 1.4]{Bo} 
where Bowditch showed that $\phi$ acts loxodromically on the curve graph,
which is more difficult since the curve graph is not locally finite. Bowditch's 
proof in turn follows the arguments of Delzant \cite{D} for a hyperbolic group.

\begin{proposition} \label{Prop:qa} 
There is a geodesic 
\[\aphi \from \ZZ \to \Map(S)\]
that is preserved by some power of $\phi$. We call the geodesic $\aphi$ the quasi-axis for 
$\phi$. 
\end{proposition}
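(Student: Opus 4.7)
The plan is to follow the classical approach of Delzant and Bowditch: construct a bi-infinite geodesic $\aphi$ as a subsequential limit of the geodesics $\calg_{-i,i}$, and then show that only finitely many $\phi$-translates of $\aphi$ can occur up to reparametrization, forcing periodicity by a pigeonhole argument.

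First I would extract $\aphi$ by a standard Arzela-Ascoli argument. For each $i \geq 1$, fix a geodesic $\calg_{-i,i}$ from $\phi^{-i}$ to $\phi^{i}$; since the identity lies on $\Aphi[-i,i]$, \propref{Prop:delta_n} guarantees that each $\calg_{-i,i}$ passes within distance $\delta_n$ of the identity. Since the generating set $\Sn$ is finite, $(\Map(S), d_\Sn)$ is proper, so a diagonal argument extracts a subsequence converging uniformly on compact subsets to a bi-infinite geodesic $\aphi \from \ZZ \to \Map(S)$, which still fellow travels $\Aphi$ with constant $\delta_n$.

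Next, since $\phi$ acts on $\Map(S)$ by $d_\Sn$-isometries, for each $k \in \ZZ$ the translate $\phi^{k}\aphi$ is again a bi-infinite geodesic fellow-travelling $\Aphi$ with some uniform constant $D = D(n,\phi)$. I would reparametrize each $\phi^{k}\aphi$ by an integer shift $t_k$ so that $\aphi_k(t) := \phi^{k}\aphi(t - t_k)$ has $\aphi_k(0)$ within distance $D$ of the identity. Fix a window size $N$, to be determined. Then $\aphi_k|_{[-N,N]}$ is a geodesic segment of length $2N$ starting in the $D$-ball around the identity and remaining in the $D$-neighborhood of $\Aphi$; by properness, there are only finitely many such segments. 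By pigeonhole some two distinct integers $k_1 < k_2$ yield $\aphi_{k_1}|_{[-N,N]} = \aphi_{k_2}|_{[-N,N]}$. Setting $m = k_2 - k_1$ and $s = t_{k_1} - t_{k_2}$, this reads $\phi^{m}\aphi(t) = \aphi(t + s)$ for all $t$ in an interval of length $2N$.

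The main obstacle, and the step that dictates the choice of $N$, is promoting this local agreement to a global one. The natural move is to apply $\phi^{m}$ to both sides of the relation and use the relation a second time: this yields $\phi^{2m}\aphi(t) = \aphi(t + 2s)$ on an overlapping interval, and iterating in both directions shows $\phi^{jm}\aphi(t) = \aphi(t + js)$ for all $j \in \ZZ$, on intervals whose union is all of $\ZZ$, provided $N$ exceeds $|s|$ by a definite buffer. The shift $|s|$ is bounded in terms of $D$ and the quasi-geodesic constants of $\Aphi$ supplied by \lemref{Lem:quasi-geodesic}, so fixing $N$ large enough at the outset makes the propagation go through. The upshot is that $\phi^{m}$ preserves $\aphi$ as a parametrized geodesic (translating parametrization by $s$), which is the desired quasi-axis.
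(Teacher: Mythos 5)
The first half of your argument --- extracting $\aphi$ as a diagonal limit of the geodesics $\calg_{-i,i}$ using properness of $(\Map(S), d_\Sn)$ and the fellow-travelling bound of \propref{Prop:delta_n} --- matches the paper's setup. The gap is in the propagation step, which as written does not close. Your pigeonhole produces the relation $\phi^{m}\aphi(t) = \aphi(t+s)$ only for $t$ in a single interval $I$ of length $2N$. Applying $\phi^{m}$ and substituting the relation again yields $\phi^{2m}\aphi(t) = \aphi(t+2s)$ only on $I \cap (I - s)$, an interval of length $2N - |s|$; after $j$ iterations the valid domain is $\bigcap_{\ell=0}^{j-1}(I - \ell s)$, of length $2N - (j-1)|s|$, which is empty once $j$ exceeds roughly $2N/|s|$. (Note $s \neq 0$, since $s=0$ would make $\phi^{m}$ fix a vertex of the Cayley graph.) So you never escape a bounded window, regardless of how large $N$ is chosen at the outset; the union of the domains is not all of $\ZZ$ but a bounded set, and you do not obtain $\phi^{m}\aphi = \aphi$ globally. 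The underlying geometric issue is that two geodesics in a graph can coincide on an arbitrarily long segment and then branch apart, and nothing in your construction prevents $\phi^{m}\aphi$ and the shifted $\aphi$ from doing exactly that just outside $I$.

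This is precisely the obstruction that the paper's \emph{lexicographically least} device is introduced to remove. The paper considers the set $\calL$ of all bi-infinite geodesic limits lying in the $\delta_n$--neighborhood of $\Aphi$, fixes a $\phi$--invariant total order on the finite set $\calL/\phi$ of edge classes, and passes to the subset $\calL_L$ of lex-least geodesics. Lex-leastness forces two such geodesics passing through the same pair of far-apart vertices to coincide between them, and a pigeonhole over the finitely many endpoint pairs in $\delta_n$--balls bounds $|\calL_L|$ by a constant; since $\phi$ permutes the finite set $\calL_L$, a power of $\phi$ fixes each member, giving the axis. To salvage your approach you would need an analogous canonical selection rule (a tie-breaker among geodesics) that upgrades local coincidence on a window to global coincidence; the bare limit geodesic $\aphi$ does not come with that rigidity.
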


\begin{proof}
The statement is true for the action of any pseudo-Anosov homeomorphism in
any mapping class group equipped with any word metric coming from a finite generating 
set. We only sketch the proof since it is a simpler version of the argument given in \cite{Bo}. 

Let $\calL(i,j)$ be the set of all geodesics connecting $\phi^i$ to $\phi^{\, j}$. Note
that every point on every path in $\calL(i,j)$ lies in the $\delta_n$--neighborhood of
$\Aphi$.  Letting $i \to \infty$, $j \to -\infty$ and using a diagonal limit argument
($\Map(S)$ is locally finite) we can find bi-infinite geodesics that are the limits of geodesic 
segments in sets $\Aphi[i,j]$. Let $\calL$ be the set of all such  bi-infinite geodesics. 
Then $\phi(\calL)=\calL$ and every geodesic in $\calL$ is also contained in the
$\delta_n$--neighborhood of $\Aphi$. Let $\calL/\phi$ represent the set of edges
which appear in a geodesic in $\calL$ up to the action of $\phi$. Then $\calL/\phi$
is a finite set. 

Choose an order for $\calL/\phi$. We say a geodesic $\calg \in \calL$
is lexicographically least if for all vertices $x, y \in \calg$, the sequence of 
$\phi$-classes of directed edges in the segment $\calg_0 \subset \calg$ between 
$x$ and $y$ is lexicographically least among all geodesic segments from $x$ to $y$ 
that are part of a geodesic in $\calL$.  Let $\calL_L$ be the set lexicographically least 
elements of $\calL$. We will show that every element of $\calL_L$ is preserved
by a power of $\phi$. 

Let $P$ be the cardinality of a ball of radius $\delta_n$ in $(\Map(S), d_\Sn)$. We claim 
that $|\calL_L| \leq P^2+1$. Otherwise, we can find $P^2 + 1$ elements of $\calL_L$
which all differ in some sufficiently large compact subset $N_{\delta_n}(\Aphi)$,
the $\delta_n$--neighborhood of $\Aphi$.  In particular, we can find 
$x, y \in N_{\delta_n}(\Aphi)$ so that each of these $P^2 + 1$ geodesics has a 
subsegment connecting a point in $N_{\delta_n}(x)$ to a point in $N_{\delta_n}(y)$, 
and these subsegments are all distinct. But then, at least two such segments must 
share the same endpoints, which means they cannot both be lexicographically least. 

Since $\phi$ permutes elements of $\calL_L$,  each geodesic in $\calL_L$
is preserved by $\phi^{P^2+1}$. 
\end{proof}

As before, we use the notation 
$\aphi$ to denote both the map and the image of the map in $\Map(S)$. 
We now show that the projection of a ball that is disjoint from 
$\aphi$ to $\aphi$ grows at most logarithmically with the radius of the ball
proving that \thmref{Thm:Main-Theorem} is sharp. 

\begin{corollary}\label{Cor:Diam-Proj}
There are uniform constants $c_1, c_2>0$ so that, for $x \in \Map(S)$ 
and $R= d_\Sn(x, \aphi)$, we have 
\begin{equation*}
\diam_\CS\!\Big(\PG \!\big(\Ball(x, R) \big) \Big) 
\leq c_1 \log (n) \log(R) + c_2 n.
\end{equation*}
\end{corollary}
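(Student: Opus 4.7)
The plan is to iterate \thmref{Thm:DR} along a $\Sn$-geodesic from $x$ to an arbitrary $y \in \Ball(x,R)$ and take the maximum over $y$. Since $R = d_\Sn(x,\aphi)$, I first use \propref{Prop:delta_n} to replace the quasi-axis $\aphi$ with the geodesic $\calg$ appearing in \thmref{Thm:DR}, paying an additive error of $\delta_n = O(n)$ in the $d_\Sn$-distance from $x$ to $\calg$; by the Lipschitz property of the shadow map (\lemref{lemma: cc to map}), this contributes at most $O(n)$ to $d_\CS$, accounting for the $c_2 n$ term in the stated bound.

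For each $y \in \Ball(x,R)$, fix a $\Sn$-geodesic $\gamma$ from $x$ to $y$ and inductively construct a sequence $x = x_0, x_1, \ldots, x_k$ along $\gamma$ by setting
\[
d_\Sn(x_i, x_{i+1}) = d_\Sn(x_i,\calg)/B_n,
\]
stopping either when $x_k = y$ or when $d_\Sn(x_k,\calg) \leq B_n$. At each step \thmref{Thm:DR} applied at $x_i$ yields
\[
d_\CS\!\big(\Proj_\calG(x_i),\, \Proj_\calG(x_{i+1})\big) \leq B,
\]
where $B$ depends only on $\phi$ and the hyperbolicity constant of $\CS$, and in particular is independent of $n$. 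If the iteration halts with $d_\Sn(x_k,\calg) \leq B_n$, then \lemref{lemma: cc to map} controls the residual $\CS$-distance from $\Proj_\calG(x_k)$ to $\Proj_\calG(y)$ by $O(B_n) = O(n)$, which is again absorbed into the $c_2 n$ term.

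The crux is bounding the number of steps $k$. The triangle inequality gives $d_\Sn(x_{i+1},\calg) \geq (1 - 1/B_n)\, d_\Sn(x_i,\calg)$, so the distance to $\calg$ decays geometrically at rate $(1-1/B_n)$ per step. I would organize the iteration into $O(\log(R/B_n)) = O(\log R)$ dyadic scales of $d_\Sn(\cdot,\calg)$ (from $R$ down to $B_n$), and argue within each scale that the total $\CS$-contribution is only $O(\log n)$ by combining hyperbolicity of $\CS$ with the fact that $B$ does not depend on $n$. Summing across scales produces the $c_1 \log(n)\log(R)$ term.

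The main obstacle will be establishing the per-scale bound of $O(\log n)$, which is a genuine improvement over the naive $O(B_n) = O(n)$ one gets from simply chaining $B$-sized $\CS$-jumps over the $\approx B_n$ steps it takes to traverse a dyadic annulus. This sharper bound should follow from the fact, already built into \thmref{Thm:DR}, that the $\CS$-image of $\calg$ is a quasi-geodesic in a Gromov hyperbolic space: once many $B$-sized jumps near $\calg$ fellow-travel $\calG$, hyperbolicity of $\CS$ prevents their $\CS$-diameter from accumulating linearly, and a careful bookkeeping (analogous to standard contracting-axis arguments in hyperbolic spaces) reduces the per-scale contribution from $O(n)$ to $O(\log n)$.
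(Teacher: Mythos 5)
Your first two paragraphs reproduce the paper's argument: take $y \in \Ball(x, R - B_n)$, march along a $\Sn$-geodesic from $x$ to $y$ with each step a $1/B_n$-fraction of the current distance to the axis (so that \thmref{Thm:DR} bounds each $\CS$-jump of the projection by the $n$-independent constant $B$), observe that the distance to the axis decays by at most a $(1-1/B_n)$ factor per step, and handle the outer annulus $\Ball(x,R)\setminus\Ball(x,R-B_n)$ by the Lipschitz property of $\Upsilon$ and of closest-point projection in $\CS$. Tracing the constants, this gives roughly $N \lesssim B_n\log R$ steps, hence a $\CS$-diameter of order $B_n \log R \asymp n\log R$, plus $O(B_n) \asymp n$ from the annulus.

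Where you depart from the paper is in the third and fourth paragraphs, where you try to sharpen the per-scale contribution from $O(n)$ to $O(\log n)$. The paper does not do this; it simply chains the $B$-sized jumps, and the estimate that actually comes out of its computation is $c_1 n\log R + c_2 n$, with $c_n' \asymp B_n \asymp n$ in equation \eqnref{Eq:y}. The claim "$c_n = \max(Bc_n', B_n c'') \asymp \log(n)$" on the last line is a slip, not a theorem you need to recover; what matters for \thmref{Thm:Main-Theorem} is only the $\log R$ dependence, and that part is correct.

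Moreover, the mechanism you sketch for the $O(\log n)$ improvement does not work. The $N+1$ points $\PG(x_0),\ldots,\PG(x_N)$ lie along the quasi-geodesic $\calG$ in $\CS$ with consecutive jumps at most $B$, and nothing forces these jumps to cancel: as $x_i$ moves monotonically along the $\Sn$-geodesic from $x$ toward $y$, the images $\PG(x_i)$ are free to drift monotonically along $\calG$, accumulating $\CS$-distance on the order of $BN$. Gromov hyperbolicity of $\CS$ controls how far the projections can stray \emph{off} a geodesic, not how far a sequence of bounded jumps can spread \emph{along} one, so "standard contracting-axis bookkeeping" gives no extra savings here. You should drop the attempted improvement and state the bound as $c_1 n\log R + c_2 n$, which is both what the iteration yields and all that is needed for sharpness of \thmref{Thm:Main-Theorem}.
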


\begin{proof}
Consider $y \in \Ball(x, R-B_n)$. Let $N$ be the smallest number so that there is 
a sequence of points along the geodesic connecting $x$ to $y$ 
\[
x=x_0, x_1, \dots, x_N=y
\]
so that 
\[
d_\Sn(x_i,x_{i+1}) \leq \frac{d_\Sn(x_i,\aphi)}{B_n}.
\] 
Then, 
\begin{align*}
d_\Sn(x_{i+1}, \aphi) 
&\geq d_\Sn(x_i, \aphi) - d_\Sn(x_i,x_{i+1}) \\
&\geq d_\Sn(x_i, \aphi) -\frac{d_\Sn(x_i,\aphi)}{B_n}
\geq d_\Sn(x_i, \aphi) \left(1 - \frac 1{B_n}\right). 
\end{align*}
Hence, 
\[
d_\Sn(x_i, \aphi)  \geq R \left(1 - \frac 1{B_n}\right)^i. 
\]
Since $N$ is minimum
\[
d_\Sn(x_i,x_{i+1}) +1 \geq \frac{d_\Sn(x_i,\aphi)}{B_n},
\]
which implies
\[
d_\Sn(x_i,x_{i+1}) \geq  \frac{R}{B_n} \left(1 - \frac 1{B_n}\right)^i -1.
\]

Since $d_\Sn(x, y) \leq R - B_n$, 
\[
d_\Sn(x_i, \aphi) \geq R - d_\Sn(x, x_i) \geq R - d_\Sn(x, y)  \geq B_n. 
\]
Applying Theorem \ref{Thm:DR} to $r_i = d_\Sn(x_i, \aphi)/B_n$ and 
$x_{i+1} \in \Ball(x_i, r_i)$ we get 
\[
d_\CS\big(\PG(x_i), \PG(x_{i+1}) \big) \leq B,
\]
and hence, 
\begin{equation} \label{Eq:y}
d_\CS\big(\PG (x), \PG(y) \big)  \leq B c_n' \log R. 
\end{equation}
Now, for any $y' \in \Ball(x, R)$ there is a $y \in \Ball(x, R-B_n)$ with 
$d_\Sn(y, y') \leq B_n$. But $\Upsilon$ is $4$--Lipschitz and the closest
point projection from $\CS$ to $\Gphi$ is also Lipschitz with a Lipschitz
constant depending on the hyperbolicity constant of $\CS$. Therefore, 
\begin{equation}  \label{Eq:y'}
d_\CS\big(\PG(y), \PG(y')\big) \leq c'' B_n, 
\end{equation} 
where $c''$, the Lipschitz constant for $\PG$, is a uniform constant. 
By letting 
\[c_n = \max(B c_n' , B_n c'') \asymp \log(n),\]
the Corollary follows from \eqnref{Eq:y} and \eqnref{Eq:y'} and the triangle inequality. 
\end{proof} 


\section{The logarithmic lower-bound} \label{Sec:Upper_bound}
In this section, we will show that the quasi-axis $\aphi$ of the 
pseudo-Anosov map $\phi$ does not have the strongly contracting property
proving \thmref{Thm:Main-Theorem} from the introduction.

\begin{definition} \label{Def:Projection}
Given a metric space $(X, d_X)$, a subset $\calG$ of $X$ and constants
$d_1, d_2 >0$, we say a map $\Proj\from X \to \calG$, 
a $(d_1,d_2)$--projection map if for every $x \in X$ and $g \in \calG$, 
\[
d_X\big(\Proj (x), g \big) \leq d_1 \cdot d_X(x,g) + d_2. 
\]
\end{definition}

To prove this theorem, notice first that the geodesic found in Section 2 may not determine 
the nearest point of $\Aphi$ to $w_k = D_{\alpha_1}^{m_k}$, where $m_k = n^{k} + n^{k-1} + \ldots + n + 1$.

\begin{lemma}
If $\phi^{p_k}$ is the nearest point of $\Aphi$ to $w_k$, then $p_k \geq k/5$.
\end{lemma}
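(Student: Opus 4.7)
The plan is to combine the homomorphism $h\from\Map(S)\to\ZZ$ of Theorem~\ref{thm:homo}, the rigid lower bound from Lemma~\ref{cor1}, and the explicit word decompositions proved in Proposition~\ref{thm:geodesics}. Since each Dehn twist $D_{\alpha_i}$ lies in the $+1$ class of $h$, we have $h(\phi)=5$ and $h(w_k)=m_k$. Because $n\equiv 1\pmod{n-1}$, each $n^i\equiv 1$, so $m_k\equiv k+1\pmod{n-1}$, and the quotient is exactly
\[
q\ :=\ \frac{m_k-(k+1)}{n-1}\ =\ n^{k-1}+2n^{k-2}+\cdots+(k-1)n+k,
\]
the integer appearing throughout Proposition~\ref{thm:geodesics}. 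For any integer $p$ with $|k+1-5p|<(n-1)/2$, Lemma~\ref{cor1} applied to $h(w_k\phi^{-p})=m_k-5p=q(n-1)+(k+1-5p)$ yields the rigid lower bound
\[
\norm{w_k\phi^{-p}}_\Sn\ \geq\ q+|k+1-5p|.
\]

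For an explicit upper bound at an integer power with index at least $k/5$, I would start from Proposition~\ref{thm:geodesics}, which gives $\norm{w_k\phi^{-(k+1)/5}}_\Sn=q$, the formal product $\phi^{(k+1)/5}=D_{\alpha_{k+1}}\cdots D_{\alpha_1}$ being a word of length $k+1$ in Dehn twists. Choosing $p^*=\lceil(k+1)/5\rceil\geq k/5$, the genuine integer power $\phi^{p^*}$ differs from $\phi^{(k+1)/5}$ on the left by the product of at most four additional Dehn twists, so
\[
\norm{w_k\phi^{-p^*}}_\Sn\ \leq\ q+4.
\]

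If $\phi^{p_k}$ is the nearest point of $\Aphi$ to $w_k$, then $q+|k+1-5p_k|\leq\norm{w_k\phi^{-p_k}}_\Sn\leq\norm{w_k\phi^{-p^*}}_\Sn\leq q+4$, forcing $|k+1-5p_k|\leq 4$ and placing $p_k$ within one unit of $(k+1)/5$. To upgrade this to $p_k\geq k/5$, I would use that the decomposition in the proof of Proposition~\ref{thm:geodesics} is length-minimising, i.e., realises equality in Lemma~\ref{cor1}, so the word lengths $\norm{w_k\phi^{-p}}_\Sn$ for $p$ near $(k+1)/5$ agree \emph{exactly} with $q+|k+1-5p|$; the minimiser over integer $p$ of this expression then satisfies $p\geq k/5$ after accounting for the rounding of $(k+1)/5$.

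The main technical obstacle is this last refinement. Since the lower and upper bounds differ only by a small additive constant, one cannot rule out integers $p_k$ marginally below $k/5$ by asymptotics alone; the comparison must instead proceed through the exact arithmetic identity $h(w_k\phi^{-p})\equiv (k+1)-5p\pmod{n-1}$, which is only effective because the explicit decomposition of Proposition~\ref{thm:geodesics} matches Lemma~\ref{cor1} with equality.
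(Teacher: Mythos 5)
Your approach is essentially the one the paper takes: both hinge on the homomorphism $h$ of Theorem~\ref{thm:homo}, the lower bound of Lemma~\ref{cor1}, and the observation that $m_k \equiv k+1 \pmod{n-1}$, so that $h(w_k\phi^{-p}) = m_k - 5p = q(n-1) + (k+1-5p)$ with $q = (m_k-(k+1))/(n-1)$. Your steps 1--4 are carried out more carefully than the paper's sketch: the lower bound $\norm{w_k\phi^{-p}}_\Sn \geq q + |k+1-5p|$, the upper bound at a genuine power near $(k+1)/5$, and the resulting constraint $|k+1-5p_k|\leq 4$ are all correct.

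The gap you flag in your final paragraph is real and not merely a ``refinement.'' The minimiser over integers $p$ of $q + |k+1-5p|$ need \emph{not} satisfy $p\geq k/5$: when $k\equiv 1\pmod 5$, the nearest integer to $(k+1)/5$ is $(k-1)/5$, which is strictly less than $k/5$, and for such $k$ one checks directly (using the explicit decompositions from Proposition~\ref{thm:geodesics} to pin down equality) that $\norm{w_k\phi^{-(k-1)/5}}_\Sn = q+2$ while $\norm{w_k\phi^{-(k+4)/5}}_\Sn = q+3$, so the unique nearest vertex of $\Aphi$ has index $(k-1)/5 < k/5$. Thus the ``after accounting for the rounding'' step cannot go through as written; the best one can extract from your argument is a bound of the form $p_k \geq (k-4)/5$. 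It is worth noting that the paper's own proof is equally terse here: it compares $\norm{w_k\phi^{-m}}_\Sn$ to $\norm{w_k\phi^{-k/5}}_\Sn$ where $\phi^{k/5}$ is the \emph{formal} product of $k$ Dehn twists, which is a vertex of $\Aphi$ only when $5\mid k$, and it contains the typos $(m_k-k)$ for $(m_k-(k+1))$ and $(m_k-m)$ for $(m_k-5m)$. A cleaner statement of the lemma would be $p_k\geq k/5 - O(1)$ (or one should restrict to the residue classes of $k$ for which the sharper bound genuinely holds), which is all that is needed downstream in the proof of Theorem~\ref{Thm:Main-Theorem}, since only linear growth of $p_{k_i}$ in $k_i$ is used.
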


\begin{proof}
Consider a point $\phi^m$ on $\Aphi$ where $m < k/5$. 
Applying the homomorphism $h$ we have
\[
h(w_k \phi^{-m}) = (m_k - 5\, m) > (m_k - k) = h(w_k \phi^{-k/5}).
\]
But $(m_k - k)$ is divisible by $(n-1)$. Hence, if we write $(m_k - m)= q (n-1) + r$,
where $|r| \leq \frac{(n-1)}2$, we have 
\[
|q| \geq \frac{m_k - k}{n-1}, \qquad\text{and}\qquad |r| \geq 0. 
\]
Lemma \ref{cor1} implies that 
$\norm{w_k \phi^{-m}}_\Sn > \norm{w_k \phi^{-k/5}}_\Sn$, which means the closest point 
in $\calA_\phi$ to $w_k$ is some point $\phi^{p_k}$ where $p_k \geq k/5$. 
\end{proof}

\noindent Let $R_k = d_{\Sn}(w_k, \phi^{p_k}) = d_{\Sn}(w_k, \calA_\Phi)$ and $\Delta_k = d_{\Sn}(w_k, \phi^{(k+1)/5})$.

\begin{proof}[Proof of Theorem \ref{Thm:Main-Theorem}]
For fixed $d_1, d_2 > 0$, let $\Proj_\aphi\from \Map(S) \to \aphi$ be any 
$(d_1,d_2)$-projection map. Fix $n$ large enough so that 
\begin{equation} \label{Eq:n} 
\sigma > \frac{5 \, d_1}{n-1}.
\end{equation} 
Choose the sequence $\left\{ k_{i} \right\}= \left\{ 2n^{i}-3 \right\}$ and recall that 
\[
v_{k_i} = D_{\alpha_1}^{\frac{{k_{i} }+1}{2}} D_{\alpha_2}^{\frac{{k_{i}}+1}{2}}.
\] 
By Example \ref{example1} (notice that $\frac{k_i+1}2 = n^i-1$)
\begin{equation} \label{Eq:v}
d_{\Sn} \left( v_{k_i}, \id \right) = \norm{v_{k_i}}_\Sn=\frac{k_{i}  + 1}{n-1}.
\end{equation}
and by \propref{thm:geodesics}, we have
\[
d_\Sn(w_{k_i}, v_{k_i}) = \Delta_{k_i}. 
\]
Consider a ball $B(w_{k_{i}}, r_i)$ of radius $r_i=R_{k_{i}} - (\delta_n+1)$ around 
$w_{k_i}$. This ball is disjoint from $\aphi$ since $\calA_\phi$ and $\aphi$ 
are $\delta_n$-fellow-travellers by \ref{Prop:delta_n}, and 
$R_{k_{i}} = d_{\Sn}(w_{k_{i}}, \calA_\Phi)$. 
For the rest of the proof, we refer to Figure \ref{Fig:main_theorem}.

\begin{figure}[ht]
\centering
\includegraphics[width=0.5\textwidth]{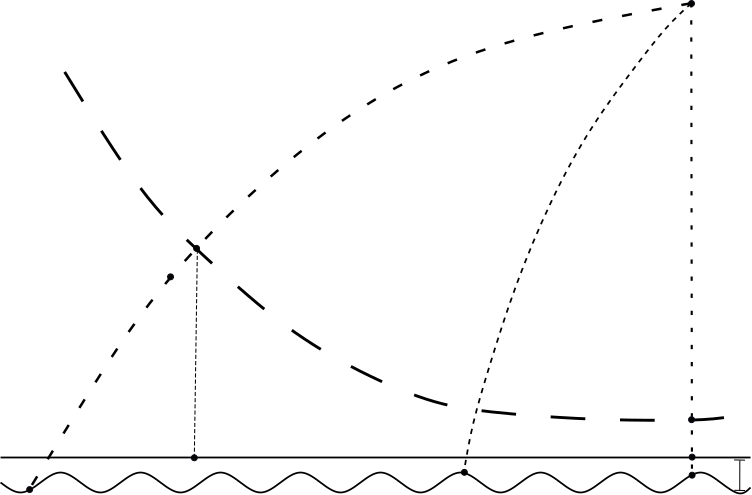}
\put(-225,10){\tiny{$a_{\Phi}$}}
\put(-225,0){\tiny{$A_{\Phi}$}}
\put(-15,60){\tiny{$R_{k_i}$}}
\put(-65,60){\tiny{$\Delta_{k_i}$}}
\put(-15,140){\tiny{$w_{k_i}$}}
\put(-177,65){\tiny{$v_{k_i}$}}
\put(-153,69){\tiny{$p$}}
\put(-15,25){\tiny{$q$}}
\put(-205,-3){\tiny{$\id$}}
\put(-156,16){\tiny{$\Proj_{\aphi}(p)$}}
\put(-51.5,16){\tiny{$\Proj_{\aphi}(q)$}}
\put(-92,-5){\tiny{$\Phi^{\frac{k_i}{5}}$}}
\put(-21,-5){\tiny{$\Phi^{P_{k_i}}$}}
\put(0,3){\tiny{$\delta_n$}}
\put(-222,100){\tiny{$B(w_{k_{i}}, r_i)$}}
\caption{Setup for the proof of Theorem \ref{Thm:Main-Theorem}}
\label{Fig:main_theorem} 
\end{figure}

Since $h$ is a homomorphism, we have 
\[
h(w_{k_{i}} \phi^{{k_{i}}/5}) = h(w_{k_{i}} \phi^{-p_{k_{i}}}) + h(\phi^{p_{k_{i}}} \phi^{-{k_{i}}/5})
\]
Theorem \ref{thm:geodesics} showed
\[
h(w_{k_{i}}  \phi^{k_{i} /5}) = (n-1) \Delta_{k_{i}}, 
\]
from \lemref{cor1}, we have 
\[
h(w_k \phi^{-p_{k_{i}}}) \leq (n-1) R_{k_{i}}
\]
and since $\norm{\phi}_\Sn =5$, we have 
\[
h(\phi^{p_{k_{i}}} \phi^{-{k_{i}}/5}) \leq 5p_{k_{i}} -{k_{i}}.
\]
The above equations imply:
\[
\Delta_{k_{i}} - R_{k_{i}} \leq \frac{5p_{k_{i}} -{k_{i}}}{n-1}.
\]
Consider a point $p$ on the geodesic from $w_{k_{i}}$ to $v_{k_i}$ 
such that $d_{\Sn}(w_{k_{i}}, p) = r_i$, ie. such that  
\[
d_{\Sn} \left( p, v_{k_i} \right) =
\Delta_{k_i}- r_i = \Delta_{k_i}-(R_{k_i}- \delta_n -1) \leq 
\frac{5p_{k_{i}} - {k_{i}}}{n-1}+\delta_n + 1.
\]
This and \eqnref{Eq:v} imply 
\begin{equation*}
\begin{aligned}
d_{\Sn}(\id, p) &\leq  \frac{{k_{i}}+1}{n-1} + \frac{5p_{k_{i}} - {k_{i}}}{n-1} + \delta_n +1\\
&= \frac{5p_{k_{i}} + 1}{n-1} + \delta_n + 1.
\end{aligned}
\end{equation*}
Since $\aphi$ and $\calA_\Phi$ are $\delta_n$-fellow-travellers by \ref{Prop:delta_n}, 
there exists a point $x_0 \in \aphi$ in the $\delta_n$ neighborhood of the identity. 
Thus $d_{\Sn}(p,x_0) \leq \frac{5p_{k_{i}} + 1}{n-1} + 2\delta_n + 1$ and 
\begin{equation}\label{eq61}
\begin{aligned}
d_{\Sn}(\id ,\Proj_{\aphi}(p)) &\leq d_{\Sn}(\id,x_0)+d_{\Sn}(x_0,\Proj_{\aphi}(p))\\
&\leq d_{\Sn}(p,x_0) + d_1 \cdot d_{\Sn}(x_0,p) + d_2\\
&\leq  \frac{5 \, d_1 \, p_{k_{i}} }{n-1} + A_p.\\
\end{aligned}
\end{equation}
where $A_p$ is a constant depending on $\delta_n$, $d_1$ and $d_2$ but is independent
of $k_i$. Similarly, we consider a point $q$ on the geodesic from $w_{k_{i}}$ to 
$\phi^{p_{k_i}}$ such that $d_{\Sn}(w_{k_{i}}, q) = r_i$. Again, since $\aphi$ and 
$\calA_\Phi$ are 
$\delta_n$-fellow-travellers by \ref{Prop:delta_n}, there exists an $x_1 \in \aphi$ 
such that $d_{\Sn}(\phi^{p_{k_i}},x_1) \leq \delta_n$, and thus 
$d_{\Sn}(q,x_1) \leq 2 \delta_n + 1$. Therefore
\begin{equation}\label{eq62}
\begin{aligned}
d_{\Sn}(\phi^{p_{k_i}},\Proj_{\aphi}(q)) &\leq d_{\Sn}(\phi^{p_{k_i}},x_1)+d_{\Sn}(x_1,\Proj_{\aphi}(q))\\
&\leq \delta_n + d_1 \cdot (2\delta_n + 1) + d_2 \leq A_q
\end{aligned}
\end{equation}
where, again, $A_q$ depends on $\delta_n$, $d_1$ and $d_2$ but is 
independent of $k_i$. Since $p,q \in B(w_{k_i}, r_i)$, we have 
\begin{align*}
\diam_{\Sn} \Big(\Proj_{\aphi} \big(B(w_{k_{i}}, r_i)\big)\Big) 
 & \geq d_\Sn \big(\Proj_{\aphi}(p), \Proj_{\aphi}(q) \big) \\
 & \geq d_\Sn \big(\id, \phi^{p_{k_{i}}} \big) - d_\Sn \big(\id, \Proj_{\aphi}(p)\big) 
   - d_\Sn \big(\Proj_{\aphi}(q), \phi^{p_{k_{i}}} \big)
\end{align*}
But $d_{\Sn}(\id,\phi^{p_{k_{i}}}) \geq \sigma p_{k_{i}}$. 
By combining this fact and equations \ref{eq61} and \ref{eq62} we find 
\begin{equation}
\begin{aligned}
\diam_{\Sn} \Big(\Proj_{\aphi} \big(B(w_{k_{i}}, r_i)\big)\Big) 
&\geq \sigma p_{k_{i}}  - \frac{5 \, d_1 \, p_{k_{i}} }{n-1}  - A_p - A_q\\
& = p_{k_i} \left( \sigma - \frac{5 d_1}{n-1} \right)  - A_p - A_q. 
\end{aligned}
\end{equation}
By our assumption on $n$ (\eqnref{Eq:n}) this expression is positive and 
goes to infinity at $p_{k_i} \to \infty$. But, for $n$ large enough, 
$r_i \leq R_{k_i} \leq \Delta_{k_i} \leq n^{k_i}$.  Also, 
$p_{k_{i}} \geq \frac{{k_{i}}}{5}$. Hence, 
\[
 \frac {5 \, p_{k_i}}{\log n} \geq \log(r_i). 
\]
 Hence, there is a constant $c_n$ so that 
\[
\diam_{\Sn} \Big(\Proj_{\aphi} \big(B(w_{k_{i}}, r_i)\big)\Big)  \geq c_n \log r_i. 
\]
This finishes the proof. 
\end{proof}



\end{document}